\newcommand\version{January 12, 2016}
\newtheorem{theorem}{Theorem}[section]
\newtheorem{proposition}[theorem]{Proposition}
\newtheorem{lemma}[theorem]{Lemma}
\newtheorem{corollary}[theorem]{Corollary}
\theoremstyle{definition}
\theoremstyle{remark}
\numberwithin{equation}{section}
\newcommand{\C}{\mathbb{C}}
\renewcommand{\epsilon}{\varepsilon}
\newcommand{\loc}{{\rm loc}}
\newcommand{\N}{\mathbb{N}}
\renewcommand{\phi}{\varphi}
\newcommand{\R}{\mathbb{R}}
\DeclareMathOperator{\dom}{dom}
\DeclareMathOperator{\im}{Im}
\DeclareMathOperator{\re}{Re}
\DeclareMathOperator{\tr}{Tr}
\begin{document}

\title[On the number of eigenvalues --- \version]{On the number of eigenvalues of Schr\"odinger operators with complex potentials}

\author{Rupert L. Frank}
\address{Rupert L. Frank, Mathematics 253-37, Caltech, Pasadena, CA 91125, USA}
\email{rlfrank@caltech.edu}

\author{Ari Laptev}
\address{Ari Laptev, Department of Mathematics, Imperial College London, SW7 2AZ, London, UK, and Institut Mittag-Leffler, Djursholm, Sweden}
\email{laptev@mittag-leffler.se}

\author{Oleg Safronov}
\address{Oleg Safronov, Department of Mathematics	 and Statistics, University of North Carolina at Charlotte, Charlotte, NC 28223, USA}
\email{osafrono@uncc.edu}

\begin{abstract}
We study the eigenvalues of Schr\"odinger operators with complex potentials in odd space dimensions. We obtain bounds on the total number of eigenvalues in the case where $V$ decays exponentially at infinity.
\end{abstract}


\maketitle

\renewcommand{\thefootnote}{${}$} \footnotetext{\copyright\, 2016 by the authors. This paper may be reproduced, in its entirety, for non-commercial purposes.}


\section{Introduction and main results}

 Let $V$   be a complex-valued  potential on ${\Bbb R}^d$, where $d$ is  odd.
 We  study  the spectral properties of the  Schr\"odinger operator 
\begin{equation}\label{Hpic}
-\Delta+V(x).
\end{equation}
 Namely,  denote
by $\lambda_j$ the eigenvalues of   the operator \eqref{Hpic}. 
We are interested in an estimate of the total number $N$ of the  eigenvalues $\lambda_j$ in the case  where $V$  decays exponentially fast.

There has been a lot of recent activity concerning uniform bounds on eigenvalues of Schr\"odinger operators with complex-valued potentials which are decaying at infinity. By `uniform bounds' we refer to bounds which do not only hold in an asymptotic regime and which depend on the potential only through some simple and computationally easily accessible quantities like $L^p$ norms. We refer to \cite{Da} for a review of the state of the art of non-selfadjoint Schr\"odinger operators and for motivations and applications. Bounds on single eigenvalues were proved, for instance, in \cite{AAD,DN,FrLaSe,Fr,En} and bounds on sums of powers of eigenvalues were proved, for instance, in \cite{FrLaLiSe,LaSa,DeHaKa0,DeHaKa,BGK,FrSa,Fr3}. The latter bounds generalize the Lieb--Thirring bounds \cite{LiTh} to the non-selfadjoint setting.

Despite this activity, there have been almost no results on the \emph{number} of eigenvalues of Schr\"odinger operators with complex potentials. (The only exceptions are two papers \cite{St1,St3} in one and three dimensions, whose relation to our work we discuss below.) To see that this is a subtle question we recall that, for instance, for the Schr\"odinger operator $-d^2/dx^2+V$ on the half-line with a Dirichlet boundary condition at the origin, Bargmann's bound states that for \emph{real} potentials $V$ the number of eigenvalues can be bounded by $\int |x| |V(x)|\,dx$. It is a remarkable result of Pavlov \cite{Pa2} that a similar bound cannot hold in the non-selfadjoint case. In fact, he showed that for any $0<\alpha<1/2$ and any $\lambda>0$ there is a (real) potential $V$ satisfying, for some $C,c>0$,
\begin{equation}
\label{eq:pavlov}
|V(x)| \leq C e^{-cx^\alpha}
\qquad\text{for all}\ x\in (0,\infty)
\end{equation}
and a complex number $\sigma$ such that the operator $-d^2/dx^2+V$ in $L^2(0,\infty)$ with boundary condition $\psi'(0)=\sigma\psi(0)$ has an infinite number of eigenvalues accumulating at $\lambda$. On the other hand, Pavlov \cite{Pa1} also showed that if, for some $C,c>0$,
\begin{equation}
\label{eq:pavlov2}
|V(x)| \leq C e^{-cx^{1/2}}
\qquad\text{for all}\ x\in (0,\infty) \,,
\end{equation}
then the number of eigenvalues of the operator $-d^2/dx^2+V$ in $L^2(0,\infty)$ with any boundary condition of the form $\psi'(0)=\sigma\psi(0)$, $\sigma\in\C$, or $\psi(0)=0$ is finite. Pavlov also proves a similar theorem in three dimensions. Pavlov's proofs, however, seem to give no bound on the number of eigenvalues in terms of the constants $c$ and $C$ in \eqref{eq:pavlov2}.

Before Pavlov, Naimark \cite{Na} had shown that the number of eigenvalues is finite if \eqref{eq:pavlov} holds with $\alpha=1$. Similar results are known in two and three dimensions, see \cite{Ma1,Ma2,Mu}, but none of these proofs gives uniform bounds on the corresponding number of eigenvalues. Such uniform bounds, in arbitrary odd dimensions, are the main result of the present paper. More precisely, we shall prove the following two theorems.

\begin{theorem}
\label{main1}
The number $N$ of eigenvalues of $-\frac{d^2}{dx^2}+V$ in $L^2(\R_+)$ with a Dirichlet boundary condition, counting algebraic multiplicities, satisfies, for any $\epsilon>0$,
$$
N \leq \frac{1}{\epsilon^2} \left( \int_0^\infty e^{\epsilon x} |V(x)|\,dx \right)^2 \,.
$$
\end{theorem}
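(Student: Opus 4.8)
The plan is to realize $N$ as the number of zeros, counted with multiplicity, of the Jost function in the upper half of the $k$-plane (where $\lambda=k^2$), to deduce from the exponential decay of $V$ that this function is holomorphic and well-controlled in a half-plane reaching strictly below the real axis, and then to count the zeros by Jensen's formula.

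First I would recall the standard dictionary. Let $f(\cdot,k)$ be the Jost solution, i.e.\ the solution of $-f''+Vf=k^2f$ with $f(x,k)=e^{ikx}(1+o(1))$ as $x\to\infty$, and set $\mathcal F(k):=f(0,k)$. Then $\lambda=k^2$ with $\im k>0$ is an eigenvalue of the Dirichlet operator exactly when $\mathcal F(k)=0$, and its algebraic multiplicity equals the order of that zero (this is the usual relation between eigenvalues and zeros of the perturbation determinant $\det\bigl(I+|V|^{1/2}(-d^2/dx^2-k^2)^{-1}|V|^{1/2}e^{i\arg V}\bigr)$ built from the free Dirichlet resolvent, of which $\mathcal F$ is a normalization). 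Hence $N=\#\{k:\im k>0,\ \mathcal F(k)=0\}$. Next, writing $m(x,k)=e^{-ikx}f(x,k)$, the function $m$ satisfies the Volterra equation $m(x,k)=1+\int_x^\infty \frac{e^{2ik(y-x)}-1}{2ik}\,V(y)\,m(y,k)\,dy$, whose kernel, for $\im k\ge-\epsilon/2$ and $0\le x\le y$, is bounded by $|k|^{-1}e^{\epsilon(y-x)}\le|k|^{-1}e^{\epsilon y}$. Iterating this equation shows that $\mathcal F$ extends holomorphically to $\{\im k>-\epsilon/2\}$ and satisfies there
$$|\mathcal F(k)-1|\le \exp\!\Bigl(\tfrac1{|k|}\textstyle\int_0^\infty e^{\epsilon y}|V(y)|\,dy\Bigr)-1=e^{I_\epsilon/|k|}-1,\qquad I_\epsilon:=\int_0^\infty e^{\epsilon x}|V(x)|\,dx .$$
Consequently $\mathcal F\to1$ at infinity in this half-plane, $\log|\mathcal F(k)|\le I_\epsilon/|k|$ throughout it, and every zero of $\mathcal F$ obeys $|k|\le I_\epsilon/\log 2$; the $N$ zeros to be counted have in addition $\im k>0$.

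Finally I would count the zeros. Conformally mapping $\{\im k>-\epsilon/2\}$ onto the unit disc and sending a point $iA$ on the imaginary axis — with $A$ large enough that $|\mathcal F(iA)|\ge\tfrac12$, which is possible since $\mathcal F(iA)\to1$ — to the origin, Jensen's formula bounds a weighted sum over the zeros of $\mathcal F$ by the harmonic-measure integral of $\log|\mathcal F|$ along the line $\{\im k=-\epsilon/2\}$, minus $\log|\mathcal F(iA)|$. On that line $|k|\ge\epsilon/2$, and indeed $\log|\mathcal F(s-i\epsilon/2)|\le I_\epsilon/\sqrt{s^2+\epsilon^2/4}$; integrating this against the Poisson kernel keeps the boundary term of size $\lesssim I_\epsilon/\epsilon$. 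On the other hand, each zero with $\im k_j>0$ and $|k_j|\le I_\epsilon/\log 2$ contributes to the Jensen sum at least a definite amount expressible through $\epsilon$, $A$ and $I_\epsilon$. Choosing $A$ appropriately and tracking the constants yields the bound $N\le\epsilon^{-2}I_\epsilon^2$. (If $\mathcal F(0)=0$ one first divides out this zero; the finitely many real zeros of $\mathcal F$ and the resonance zeros with $-\epsilon/2<\im k\le0$ only enlarge the Jensen sum and may be discarded.)

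The crux is this last step. The only available region of holomorphy is the half-plane $\{\im k>-\epsilon/2\}$, whose boundary line lies only a distance $\epsilon/2$ below the real axis, whereas the zeros can be spread over a disc of radius $\sim I_\epsilon/\log 2$, typically much larger; so one cannot enclose the zeros in a disc contained in the domain and apply Jensen there directly. One is forced to apply Jensen "from infinity", where the harmonic measure of the domain seen from the base point is diffuse, and the clean bound comes out only from combining the decay estimate $\log|\mathcal F(k)|\le I_\epsilon/|k|$ (which keeps the boundary integral of order $I_\epsilon/\epsilon$) with the uniform confinement $|k_j|\le I_\epsilon/\log 2$ of the zeros, together with a careful choice of the auxiliary point. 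A convenient companion fact, giving another way to control $\log|\mathcal F|$ in the interior of the half-plane, is that for $\im k>0$ the free Dirichlet Green's function satisfies $|G_k(x,y)|\le\min(x,y)$, so that the Birman--Schwinger operator obeys $\bigl\||V|^{1/2}(-d^2/dx^2-k^2)^{-1}|V|^{1/2}\bigr\|_{\mathrm{HS}}^2\le\bigl(\int_0^\infty x|V(x)|\,dx\bigr)^2\le\bigl(I_\epsilon/(e\epsilon)\bigr)^2$.
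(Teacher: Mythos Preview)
Your overall strategy---realize the eigenvalues as zeros of an analytic function in the $k$-plane, use exponential decay of $V$ to push the domain of analyticity to $\{\im k>-\epsilon/2\}$, then count zeros by a Jensen-type argument---is exactly the paper's plan. The gap is in the choice of the analytic function and, consequently, in the ``crux'' step you flag at the end.

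You work with the Jost function $\mathcal F$, which is (a normalization of) the ordinary determinant $\det(1+K(k))$. Your Volterra estimate gives $\log|\mathcal F(k)|\le I_\epsilon/|k|$, and this $|k|^{-1}$ decay is the best one can hope for: the first iterate $\int_0^\infty\frac{e^{2iky}-1}{2ik}V(y)\,dy$ is genuinely of order $1/|k|$. But $1/|k|$ is \emph{not integrable} along the line $\im k=-\epsilon/2$. If you carry out the Poisson--Jensen computation you sketch (base point $iA$, $B=A+\epsilon/2$), the boundary term is
\[
\frac{1}{\pi}\int_{\R}\frac{B}{s^2+B^2}\,\log|\mathcal F(s-i\epsilon/2)|\,ds
\ \le\ \frac{I_\epsilon}{\pi}\int_{\R}\frac{B\,ds}{(s^2+B^2)\sqrt{s^2+\epsilon^2/4}}
\ \sim\ \frac{C\,I_\epsilon}{B}\,\log\frac{B}{\epsilon},
\]
while each zero with $\im k_j>0$ contributes only $\sim\epsilon/B$ once $B$ exceeds the confinement radius $I_\epsilon/\log 2$. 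Optimizing in $B$ then gives at best $N\lesssim (I_\epsilon/\epsilon)\log(I_\epsilon/\epsilon)$ with an unspecified constant, not the stated $N\le I_\epsilon^2/\epsilon^2$. In the paper's formulation (Proposition~\ref{zeroes}) the same obstruction appears as the requirement $\nu>1$ in \eqref{eq:ass2}, which fails for $\mathcal F$; moreover $\mathcal F(k)=1+O(|k|^{-1})$ rather than $1+o(|k|^{-1})$, so \eqref{eq:ass1} fails too.

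The missing idea is to replace $\mathcal F$ by the \emph{regularized} determinant $a(k)=\det_2(1+K(k))=\det(1+K(k))\,e^{-\tr K(k)}$. The Hilbert--Schmidt bound of Proposition~\ref{resbound1d} gives $\|K(k)\|_2\le I_\epsilon/|k|$ on $\{\im k\ge -\epsilon/2\}$, and then Lemma~\ref{detbounds} yields $\ln|a(k)|\le\tfrac12\|K(k)\|_2^2\le \tfrac12 I_\epsilon^2/|k|^2$ and $a(k)=1+O(|k|^{-2})$. This $|k|^{-2}$ decay is integrable along the boundary line; applying Corollary~\ref{zeroescor} with $\nu=2$, $A=\tfrac12 I_\epsilon^2$, $\eta=-\epsilon/2$ and $c_2=\tfrac12$ gives exactly $N\le I_\epsilon^2/\epsilon^2$. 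The exponential regularization kills precisely the $O(1/|k|)$ term that spoils your Jensen estimate; without it the argument does not close with the claimed constant (or even with any constant in the form $C\,I_\epsilon^2/\epsilon^2$).

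A secondary point: the identification of the order of the zero of your analytic function with the \emph{algebraic} multiplicity of the eigenvalue is nontrivial for complex $V$; the paper handles this via Lemma~\ref{mult} (for $\det_n$). Your ``companion fact'' $|G_k(x,y)|\le\min(x,y)$ is correct but irrelevant here, since the difficulty lives on the line $\im k=-\epsilon/2$, not in the open upper half-plane.
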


\begin{theorem}
\label{main2}
Let $d\geq 3$ be odd. Then the number $N$ of eigenvalues of $-\Delta+V$ in $L^2(\R^d)$, counting algebraic multiplicities, satisfies, for any $\epsilon>0$,
$$
N \leq \frac{C_d}{\epsilon^2} \left( \int_{\R^d} e^{\epsilon |x|} |V(x)|^{(d+1)/2} \,dx \right)^2
$$
with a constant $C_d$ depending only on $d$.
\end{theorem}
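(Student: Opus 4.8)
The plan is to reduce the eigenvalue counting problem to a statement about zeros of an analytic function, namely a regularized perturbation determinant, and then to apply Jensen's inequality in the spirit of the one-dimensional argument used for Theorem \ref{main1}.

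First I would introduce the Birman--Schwinger operator. Writing $V = |V|^{1/2} U |V|^{1/2}$ with $U = \sgn V$ (a unitary multiplication operator, polar decomposition of $V$), a number $\lambda \in \C \setminus [0,\infty)$ is an eigenvalue of $-\Delta + V$ if and only if $-1$ is an eigenvalue of the Birman--Schwinger operator
$$
K(\lambda) = |V|^{1/2} (-\Delta - \lambda)^{-1} |V|^{1/2} \,,
$$
and the algebraic multiplicities agree. Since $d$ is odd, the free resolvent has an explicit kernel that is entire in the variable $k$ with $k^2 = \lambda$ (no branch cut, precisely because $d$ is odd and $(d-1)/2$ is an integer), namely a combination of $e^{ik|x-y|}$ with polynomial prefactors in $1/|x-y|$ and $k$. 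Crucially, the exponential factor $e^{ik|x-y|}$ together with the assumed bound $e^{\epsilon|x|}|V|^{(d+1)/2} \in L^1$ gives $L^2$-type control of the kernel of $K(\lambda)$ on a half-plane $\im k > -\epsilon/2$ or so: the factor $|V(x)|^{1/2}|V(y)|^{1/2}$ absorbs the polynomial growth in $1/|x-y|$ through the Hardy--Littlewood--Sobolev / weak-Young inequality (this is where the power $(d+1)/2$ and the constant $C_d$ enter), while $e^{-(\im k)|x-y|}$ is dominated by $e^{-\im k(|x|+|y|)} \le e^{(\epsilon/2)(|x|+|y|)}$ when $\im k \ge -\epsilon/2$. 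Hence $K(\lambda)$ extends to a Hilbert--Schmidt-operator-valued (in fact trace-class after suitable regularization) analytic function of $k$ on the half-plane $\{\im k > -\epsilon/2\}$, with a Hilbert--Schmidt norm bounded by $C_d' \, \epsilon^{-?}\int e^{\epsilon|x|}|V|^{(d+1)/2}\,dx$ uniformly there.

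Next I would form the regularized perturbation determinant $D(k) = \det_2(I + K(k^2))$ (the $2$-regularized determinant, since $K$ is only Hilbert--Schmidt in general), which is analytic on the half-plane $\{\im k > -\epsilon/2\}$ and whose zeros, with multiplicity, are exactly the points $k$ with $k^2$ an eigenvalue of $-\Delta+V$ (counting algebraic multiplicity). The bound on $\|K(k^2)\|_{\mathcal S_2}$ gives, via the standard inequality $|\det_2(I+A)| \le \exp(\tfrac12\|A\|_{\mathcal S_2}^2)$, the growth bound $\log|D(k)| \le \tfrac12 C \big(\int e^{\epsilon|x|}|V|^{(d+1)/2}\,dx\big)^2$ on that half-plane. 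I would then apply a Jensen-type / Blaschke-counting argument: map the half-plane $\{\im k > -\epsilon/2\}$ (or a suitable disk inside it) conformally to the unit disk, note that all eigenvalues $\lambda_j \notin [0,\infty)$ correspond to zeros $k_j$ with $\im k_j > 0$, hence lying well inside, at distance at least $\sim \epsilon$ from the boundary line $\im k = -\epsilon/2$; Jensen's formula then bounds the number of zeros in a fixed sub-disk by (growth bound)$/(\log$ of the radius ratio$)$, and the ratio is controlled by $\epsilon$, producing the claimed $\epsilon^{-2}$ prefactor. One must check that \emph{all} eigenvalues are captured: an eigenvalue $\lambda_j = k_j^2$ with $\im k_j = 0$ (i.e.\ $\lambda_j \ge 0$, an embedded eigenvalue) would sit on the real axis inside the half-plane, still at distance $\ge \epsilon/2$ from $\{\im k = -\epsilon/2\}$, so it is counted as well; a brief absence-of-eigenvalue argument handles $\lambda = 0$ and large $|\lambda|$.

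The main obstacle I anticipate is the kernel estimate for $K(k^2)$: one must bound the Hilbert--Schmidt (or trace) norm of an integral operator whose kernel behaves like $|x-y|^{-(d-2)}$ times polynomials in $k/|x-y|$ near the diagonal, weighted by $|V(x)|^{1/2}|V(y)|^{1/2}$, uniformly for $k$ in the shifted half-plane, and to see that exactly the $L^{(d+1)/2}$ quantity with the exponential weight controls it --- this requires a careful splitting into the singular diagonal part (handled by weak-Young / HLS with the sharp exponent $(d+1)/2$, possibly after a further regularization to reach trace class) and the exponentially decaying off-diagonal part. Organizing the determinant machinery so that the analytic function is genuinely analytic across $[0,\infty)$ --- which is the whole point of restricting to odd $d$ --- and matching zero multiplicities with algebraic eigenvalue multiplicities is the other technical point, but it is by now standard. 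Once these are in place, the conformal-mapping/Jensen step is routine and exactly parallels the proof of Theorem \ref{main1}.
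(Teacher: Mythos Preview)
Your overall architecture---Birman--Schwinger, regularized determinant, analytic continuation across $[0,\infty)$ (using that $d$ is odd), zero-counting on a shifted half-plane---matches the paper's. But there are two genuine gaps.

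First, the Hilbert--Schmidt framework fails for $d\geq 5$. The free resolvent kernel has a diagonal singularity of order $|x-y|^{-(d-2)}$, so the Hilbert--Schmidt integrand $|V(x)|\,|V(y)|\,|x-y|^{-2(d-2)}$ is non-integrable near the diagonal once $2(d-2)\geq d$, i.e.\ for $d\geq 4$, regardless of how nice $V$ is. Thus $K(k)\notin\mathfrak S_2$ and $\det_2$ is undefined. The paper works instead in $\mathfrak S_{d+1}$ and uses $\det_{d+1}$; the exponent $(d+1)/2$ on $|V|$ in the theorem comes precisely from this Schatten index, not from HLS/weak-Young applied to the kernel. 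Since for $p>2$ the $\mathfrak S_p$-norm is not a kernel integral, the required bound (the paper's Proposition~\ref{resboundoddd})
\[
\|K(k)\|_{\mathfrak S_{d+1}}\leq C_d\left(|k|^{-1}\!\int_{\R^d} e^{\beta_d(\im k)_-|x|}|V(x)|^{(d+1)/2}\,dx\right)^{2/(d+1)}
\]
cannot be obtained by your pointwise kernel estimate. The paper proves it by complex interpolation in the style of Kenig--Ruiz--Sogge, using an analytic family $|V|^{\zeta/2}(-\Delta-k_\zeta^2)^{-\zeta}|V|^{\zeta/2}$ in which even the spectral parameter $k_\zeta$ moves with $\zeta$, so that on the line $\re\zeta=(d+1)/2$ one lands on a genuine Hilbert--Schmidt estimate controllable via Bessel-function asymptotics.

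Second, even granting a uniform bound $\log|D(k)|\leq M$ on the half-plane $\{\im k>-\epsilon/2\}$, Jensen/Blaschke does \emph{not} bound the number of zeros with $\im k_j\geq 0$: under the conformal map to the disk, zeros with large $|k_j|$ land arbitrarily close to the boundary, and the Blaschke sum $\sum_j(1-|z_j|)$ can be finite with infinitely many terms. What makes the counting work in the paper is the \emph{decay} $\ln|a(x+i\eta)|\leq A(x^2+\eta^2)^{-1}$ along the boundary line, coming from the factor $|k|^{-1}$ in the $\mathfrak S_{d+1}$ resolvent bound. This makes $\int_\R\ln|a(x+i\eta)|\,dx$ finite and yields $\sum_j(\im k_j-\eta)\leq c_2 A|\eta|^{-1}$ (the paper's Proposition~\ref{zeroes}), from which $N\leq c_2 A|\eta|^{-2}$ follows since each zero with $\im k_j\geq 0$ contributes at least $|\eta|$ to the sum. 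The $|k|^{-1}$ factor is not cosmetic; it is what converts the half-plane estimate into an honest count and produces the $\epsilon^{-2}$.
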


The proofs of both theorems are based on a `trace formula approach' which consists in identifying eigenvalues with zeroes of a certain analytic function and in using bounds on the zeroes of analytic functions. This approach was used before by two of us (A.L. and O.S.) in self-adjoint problems \cite{LNS} and by one of us (O.S.) in non-selfadjoint problems \cite{Sa}. In non-selfadjoint problems, a related method is used, for instance, in \cite{BGK,DeHaKa0,DeHaKa,FrSa}. In the present paper we combine these techniques with novel resolvent bounds in trace ideals, which are our technical main results. Resolvent bounds in operator norm are due to Kenig--Ruiz--Sogge \cite{KRS}. In connection to eigenvalue bounds for non-selfadjoint operators they were exploited in \cite{Fr} and generalized to trace ideals in \cite{FrSa}. Here we go a significant step further and show that, if $V$ decays exponentially in the sense of the assumptions in Theorems \ref{main1} and \ref{main2}, then the Birman--Schwinger operator admits an analytic continuation and resolvent bounds, similar to those of \cite{FrSa}, remain valid for its continuation. Our proof uses complex interpolation as in \cite{KRS} and \cite{FrSa}, but the choice of the analytic family is more involved than in those papers.

In dimensions one and three the resolvent kernel is explicit and this is important for the proofs in \cite{St1,St3}. In contrast, our Theorem \ref{main2} is valid in arbitrary odd dimensions $d\geq 3$, where the resolvent kernel is only given in terms of Bessel functions, which become increasingly more complicated as the dimension increases. Complex interpolation helps us to go around this obstacle. The assumption that the space dimension is odd comes from the fact that in this case the resolvent admits an analytic continuation to the lower half-plane (while there is a branch point at zero for even dimensions).

Finally, we note that, while we managed to obtain rather explicit and transparent bounds for potentials decaying exponentially, the question whether there is a quantitative version of Pavlov's bound remains a challenging open question.

\subsection*{Acknowledgements}

The first and third author would like to thank the Mittag--Leffler Institute for hospitality. The first author acknowledges support through NSF grant DMS-1363432.
AL was supported by the grant of the Russian Federation Government to support scientific
research under the supervision of leading scientist at Siberian Federal University, No 14.Y26.31.0006.


\section{Zeroes of analytic functions}

The following proposition gives a useful bound on the zeroes of an analytic function in a half-plane.

\begin{proposition}\label{zeroes}
Let $\eta\in\R\setminus\{0\}$. Let $a$ be an analytic function in $\{\im k>\eta\}$ which is continuous up to the boundary and satisfies
\begin{equation}
\label{eq:ass1}
a(k) = 1 + o(|k|^{-1})
\qquad\text{as}\ |k|\to\infty \ \text{in}\ \{\im k>\eta\}
\end{equation}
and, for some $A\geq 0$ and $\nu>1$,
\begin{equation}
\label{eq:ass2}
\ln|a(k)| \leq A |k|^{-\nu}
\qquad\text{if}\ \im k = \eta \,.
\end{equation}
Then the zeroes $k_j$ of $a$ in $\{\im k>\eta\}$, repeated according to multiplicities, satisfy
\begin{equation}
\label{eq:zeroes}
\sum_j \left( \im k_j - \eta \right) \leq c_\nu A |\eta|^{-\nu+1}
\end{equation}
with
$$
c_\nu = \frac{1}{2\pi} \int_\R \frac{dt}{(1+t^2)^{\nu/2}} \,.
$$
\end{proposition}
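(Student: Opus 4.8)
The plan is to reduce to the classical theory of zeros of bounded analytic functions in a half-plane via a conformal change of variables, combined with Jensen-type / Blaschke-product reasoning. The key point is that \eqref{eq:ass1} forces $a$ to behave like a ``Blaschke product times an outer-type factor'' whose boundary contribution is controlled by \eqref{eq:ass2}.

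First I would normalize the geometry: set $w = i(k - i\eta)$ (or $w = k - i\eta$ followed by a rotation), so that the half-plane $\{\im k > \eta\}$ is mapped to the upper half-plane $\{\im w > 0\}$, with the boundary line $\{\im k = \eta\}$ going to $\R$, and a zero $k_j$ going to a point $w_j$ with $\im w_j = \im k_j - \eta > 0$. Under this map $a$ becomes an analytic function $\tilde a$ on $\{\im w > 0\}$, continuous up to $\R$, with $\tilde a(w) = 1 + o(|w|^{-1})$ at infinity and $\ln|\tilde a(w)| \le A |w|^{-\nu}$ on $\R$ (after accounting for the scaling by $|\eta|$ in passing from $|k|$ to $|w|$ — this is precisely where the factor $|\eta|^{-\nu+1}$ will eventually enter). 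The quantity to bound, $\sum_j(\im k_j - \eta)$, is now $\sum_j \im w_j$.

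The core estimate is then a Blaschke-sum bound. For functions analytic in the upper half-plane that tend to $1$ at infinity fast enough, one has the standard identity/inequality that $\sum_j \im w_j$ is controlled by an integral of $\ln|\tilde a|$ over the real line. Concretely, I would apply Jensen's formula (or the Poisson--Jensen formula for the half-plane) on large semicircular contours of radius $R$, centered at $0$: the left-hand side picks up $\sum_{|w_j|<R} \im w_j$ (up to the $1/\pi$ normalization coming from the half-plane Green's function, whose value at the ``point at infinity'' relative to a zero $w_j$ is proportional to $\im w_j$), the contribution of the semicircular arc vanishes as $R\to\infty$ thanks to the decay $a(k) = 1 + o(|k|^{-1})$ (so $\ln|\tilde a|$ is $o(|w|^{-1})$ on the arc, which beats the $O(R)$ length after the $1/R$-type weight), and the flat part of the contour leaves exactly $\frac{1}{\pi}\int_\R \ln|\tilde a(t)|\, \frac{dt}{\text{(something)}}$. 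Keeping track of the half-plane Poisson kernel normalization, the upshot should be
$$
\sum_j \im w_j \le \frac{1}{\pi} \int_\R \ln^+|\tilde a(t)|\, dt \le \frac{1}{\pi}\int_\R \left( \ln|\tilde a(t)|\right)_+ dt,
$$
and then plugging in \eqref{eq:ass2} in the $w$-variable, $\ln|\tilde a(t)| \le A(|\eta| \cdot |\text{rescaled } t|)^{-\nu}$ after restoring the conformal scaling, and computing $\int_\R (1+t^2)^{-\nu/2}\,dt$ (using $\nu>1$ for convergence), yields the constant $c_\nu$ and the power $|\eta|^{-\nu+1}$ exactly as stated.

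The main obstacle — and the step that needs the most care — is making the contour/limiting argument rigorous without assuming a priori that $a$ belongs to a nice Hardy or Nevanlinna class: one must justify that the zeros do not accumulate too fast, that $\ln|a|$ is genuinely integrable on the boundary line (for which \eqref{eq:ass2} suffices on the ``large $|k|$'' part, but near a boundary zero of $a$, if any, one needs that $\ln|a|$ is only logarithmically singular and hence still integrable), and that the arc contribution really vanishes — here the hypothesis \eqref{eq:ass1}, which gives not just boundedness but $a\to 1$ with a rate, is exactly what is needed and must be used honestly. A clean way to finesse this is to first prove the bound for the finitely many zeros in a bounded region (applying Jensen on a fixed large contour and discarding the other zeros, which only helps the inequality), obtaining a bound uniform in $R$, and then let $R\to\infty$ by monotone convergence. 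One should also double-check the sign and normalization of the Green's function for the half-plane so that the constant $1/(2\pi)$ in $c_\nu$ comes out correctly rather than $1/\pi$ — the factor of $2$ discrepancy typically reflects whether one writes $\ln|a|$ or $\frac12\ln|a|^2$, or the Poisson kernel normalization $\frac{1}{\pi}\frac{y}{t^2+y^2}$ versus $\frac{1}{2\pi}$, and is worth pinning down against the explicit form of $c_\nu$ given in the statement.
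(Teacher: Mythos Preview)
Your proposal is correct and follows essentially the same route as the paper: the paper works directly in $\{\im k>\eta\}$, introduces the explicit Blaschke product $B(k)=\prod_j (k-k_j)/(k-\overline{k_j}-2i\eta)$, integrates $\log(a/B)$ around the semicircular contour you describe, uses $|B|=1$ on the line $\im k=\eta$ together with the expansion $\log B(k)=2i\sum_j(\eta-\im k_j)/(k-i\eta)+O(|k-i\eta|^{-2})$ on the arc to extract $2\pi\sum_j(\im k_j-\eta)$, and then bounds $\int_\R \ln|a(x+i\eta)|\,dx$ via \eqref{eq:ass2} exactly as you outline. The normalization you flagged is indeed $1/(2\pi)$, and the factor $|\eta|^{-\nu+1}$ arises from $|k|^2=x^2+\eta^2$ on the boundary (not a conformal scaling), giving $\int_\R (x^2+\eta^2)^{-\nu/2}\,dx=|\eta|^{-\nu+1}\int_\R(1+t^2)^{-\nu/2}\,dt$.
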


The integral appearing in $c_\nu$ can be expressed in terms of the Gamma function. For $\nu=2$, the computation is straightforward and we obtain
\begin{equation}
\label{eq:zeroesconst1}
c_2 = 1/2 \,.
\end{equation}

\begin{proof}
We introduce the Blaschke product
$$
B(k) = \prod_j \frac{k-k_j}{k-\overline{k_j} - 2i\eta} \,,
$$
so that $a(k)/B(k)$ is an analytic and non-zero in $\{\im k>\eta\}$ and $\log (a(k)/B(k))$ exists and is analytic there. For $R>\eta$ we denote by $C_R$ the contour which consists of the interval $\{k\in {\Bbb C}:\,  k=x+i\eta,\ |x|\leq \sqrt{R^2-\eta^2}\}$, traversed from left to right, and the circular part $\Gamma_R:=\{k\in {\Bbb C}:\, |k|=R \,, \im k >\eta\}$, traversed counterclockwise. 

Then
$$
\int_{C_R} \log \frac{a(k)}{B(k)}\,dk = 0 \,,
$$
and therefore
\begin{equation}
\label{eq:zeroesproof1}
\re \int_{-\sqrt{R^2-\eta^2}}^{\sqrt{R^2-\eta^2}} \log \frac{a(x+i\eta)}{B(x+i\eta)}\,dx + \re \int_{\Gamma_R} \log\frac{a(k)}{B(k)}\,dk = 0 \,.
\end{equation}
We note that $|B(x+i\eta)|=1$ if $x\in\R$ and, therefore,
\begin{align}
\label{eq:zeroesproof2}
\re \int_{-\sqrt{R^2-\eta^2}}^{\sqrt{R^2-\eta^2}} \log \frac{a(x+i\eta)}{B(x+i\eta)}\,dx 
& = \int_{-\sqrt{R^2-\eta^2}}^{\sqrt{R^2-\eta^2}} \ln \left| \frac{a(x+i\eta)}{B(x+i\eta)} \right| \,dx \notag \\
& = \int_{-\sqrt{R^2-\eta^2}}^{\sqrt{R^2-\eta^2}} \ln \left| a(x+i\eta) \right| \,dx \,.
\end{align}
On the other hand, by \eqref{eq:ass1} and $B(k) = 1+O(|k|^{-1})$ (note that the $|k_j|$ are contained in a bounded set as a consequence of \eqref{eq:ass1}), both $\log a(k)$ and $\log B(k)$ are well-defined for all sufficiently large $|k|$ and we have, for all sufficiently large $R$,
\begin{equation}
\label{eq:zeroesproof3}
\re \int_{\Gamma_R} \log\frac{a(k)}{B(k)}\,dk = \re \int_{\Gamma_R} \log a(k)\,dk - \re \int_{\Gamma_R} \log B(k)\,dk \,.
\end{equation}
We conclude from \eqref{eq:zeroesproof1}, \eqref{eq:zeroesproof2} and \eqref{eq:zeroesproof3} that
\begin{equation}
\label{eq:traceformula}
\re \int_{\Gamma_R} \log B(k)\,dk = \int_{-\sqrt{R^2-\eta^2}}^{\sqrt{R^2-\eta^2}} \ln \left| a(x+i\eta) \right| \,dx + \re \int_{\Gamma_R} \log a(k)\,dk 
\end{equation}
for all sufficiently large $R$.

We assume that $R$ is so large that $|k_j|<R$ and $|k_j -i\eta|<\sqrt{R^2-\eta^2}$ for all $j$. Then, by analyticity,
\begin{equation}
\label{eq:tf0}
\int_{\Gamma_R} \log B(k)\,dk = \int_{\tilde\Gamma_R} \log B(k) \,dk
\end{equation}
with $\tilde\Gamma_R := \{ |k-i\eta|=\sqrt{R^2-\eta^2}\,, \im k>\eta \}$, traversed counterclockwise. Since
$$
\log B(k) = 2i \sum_j \frac{\eta - \im k_j}{k-i\eta} + O( (k-i\eta)^{-2}) \,,
$$
we get
\begin{align}
\label{eq:tf1}
\int_{\tilde\Gamma_R} \log B(k) \,dk & = \int_{|\tilde k|=\sqrt{R^2-\eta^2},\ \im\tilde k>0} \log B(\tilde k+i\eta) \,d\tilde k \notag \\
& = -2\pi \sum_j \left(\eta - \im k_j\right) + O( (R^2-\eta^2)^{-1/2})
\qquad\text{as}\ R\to\infty \,.
\end{align}
On the other hand, by \eqref{eq:ass1},
\begin{align}
\label{eq:tf2}
\re \int_{\Gamma_R} \log a(k)\,dk = o(1)
\qquad\text{as}\ R\to\infty \,.
\end{align}
Finally, by \eqref{eq:ass2},
\begin{align}
\label{eq:tf3}
\int_{-\sqrt{R^2-\eta^2}}^{\sqrt{R^2-\eta^2}} \ln \left| a(x+i\eta) \right| \,dx & \leq A \int_{-\sqrt{R^2-\eta^2}}^{\sqrt{R^2-\eta^2}} \frac{dx}{(x^2+\eta^2)^{\nu/2}} \leq A \int_\R \frac{dx}{(x^2+\eta^2)^{\nu/2}} \notag \\
& = A |\eta|^{-\nu+1}  \int_\R \frac{dt}{(1+t^2)^{\nu/2}} \,.
\end{align}
Inequality \eqref{eq:zeroes} now follows from \eqref{eq:traceformula}, \eqref{eq:tf0}, \eqref{eq:tf1}, \eqref{eq:tf2} and \eqref{eq:tf3}.
\end{proof}

\begin{corollary}\label{zeroescor}
Let $\eta<0$. Let $a$ be an analytic function in $\{\im k>\eta\}$ which satisfies \eqref{eq:ass1} with $\eta$ replaced by $\eta'$ for any $\eta'>\eta$. Moreover, assume that \eqref{eq:ass2} holds for some $A\geq 0$ and $\nu>1$ with $\eta$ replaced by $\eta'$ for any $\eta'>\eta$ sufficiently close to $\eta$. Then the zeroes $k_j$ of $a$ in $\{\im k\geq 0\}$, repeated according to multiplicities, satisfy
$$
\#\{ j:\ \im k_j\geq 0 \} \leq c_\nu A |\eta|^{-\nu} \,.
$$
\end{corollary}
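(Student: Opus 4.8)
The plan is to deduce Corollary \ref{zeroescor} from Proposition \ref{zeroes} by a limiting argument in the parameter $\eta$. Fix a sequence $\eta_n \uparrow \eta$ with $\eta_n < \eta < 0$. By hypothesis, $a$ is analytic in $\{\im k > \eta_n\}$ (since $\{\im k>\eta\}\subset\{\im k>\eta_n\}$ and one needs continuity up to the boundary $\{\im k=\eta_n\}$, which holds because $a$ is actually analytic on the larger set $\{\im k>\eta\}\supset\{\im k=\eta_n\}$), satisfies \eqref{eq:ass1} with $\eta$ replaced by $\eta_n$, and for $n$ large enough satisfies \eqref{eq:ass2} with $\eta$ replaced by $\eta_n$ and the \emph{same} constants $A$ and $\nu$. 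Hence Proposition \ref{zeroes} applies with $\eta_n$ in place of $\eta$, giving
$$
\sum_j \left( \im k_j - \eta_n \right) \leq c_\nu A |\eta_n|^{-\nu+1} \,,
$$
where the sum runs over all zeroes $k_j$ of $a$ with $\im k_j > \eta_n$.

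The second step is to restrict the sum on the left to the subcollection of zeroes with $\im k_j \geq 0$; since each omitted term $\im k_j - \eta_n$ is nonnegative (all zeroes in the sum have $\im k_j > \eta_n$), this only decreases the left-hand side, so
$$
\sum_{j:\ \im k_j \geq 0} \left( \im k_j - \eta_n \right) \leq c_\nu A |\eta_n|^{-\nu+1} \,.
$$
Now for each such $j$ we have $\im k_j - \eta_n \geq -\eta_n = |\eta_n|$ (using $\eta_n < 0 \leq \im k_j$), so the left side is bounded below by $|\eta_n| \cdot \#\{ j: \im k_j \geq 0 \}$. Dividing through by $|\eta_n|$ yields
$$
\#\{ j:\ \im k_j \geq 0 \} \leq c_\nu A |\eta_n|^{-\nu} \,.
$$

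The final step is to let $n \to \infty$, so $\eta_n \to \eta$ and $|\eta_n| \to |\eta|$, giving the claimed bound $\#\{ j: \im k_j \geq 0 \} \leq c_\nu A |\eta|^{-\nu}$. (One may note the count on the left is independent of $n$, and it is finite for large $n$ by the bound itself, so the passage to the limit is immediate; one could even just take a single $\eta'$ with $\eta < \eta' < 0$ sufficiently close to $\eta$ and then optimize, but the limit is cleanest.) I do not anticipate a serious obstacle here: the only point requiring a little care is checking that the hypotheses of Proposition \ref{zeroes} are genuinely met at the shifted level $\eta_n$ — in particular that the continuity-up-to-the-boundary and the asymptotic \eqref{eq:ass1} are available there — but all of this is guaranteed by the phrasing of the corollary's hypotheses ("for any $\eta' > \eta$", resp. "for any $\eta'>\eta$ sufficiently close to $\eta$"), and the rest is the elementary estimate $\im k_j - \eta \geq |\eta|$ for nonnegative $\im k_j$.
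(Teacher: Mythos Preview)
Your overall strategy matches the paper's exactly: apply Proposition~\ref{zeroes} at a shifted level $\eta'$, drop terms, use $\im k_j - \eta' \ge |\eta'|$ for $\im k_j\ge 0$, and let $\eta'\to\eta$. However, you have the direction of the approximation reversed, and this is a real gap.

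You take $\eta_n \uparrow \eta$ with $\eta_n < \eta$, but then $\{\im k > \eta_n\}$ is \emph{larger} than $\{\im k > \eta\}$, so you cannot conclude that $a$ is analytic there, nor that $a$ is continuous up to $\{\im k = \eta_n\}$ (this line lies \emph{below} the region where $a$ is defined). Your parenthetical justification is internally inconsistent: you first write $\{\im k>\eta\}\subset\{\im k>\eta_n\}$ and then call $\{\im k>\eta\}$ ``the larger set'' containing $\{\im k=\eta_n\}$. Moreover, the corollary's hypotheses supply \eqref{eq:ass1} and \eqref{eq:ass2} only for $\eta'>\eta$, so with $\eta_n<\eta$ you have no bound available on the line $\im k = \eta_n$ either.

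The fix is simply to take $\eta_n \downarrow \eta$ with $\eta < \eta_n < 0$ (equivalently, work with a single $\eta'\in(\eta,0)$ and then let $\eta'\to\eta^+$, exactly as the paper does). Then $\{\im k>\eta_n\}\subset\{\im k>\eta\}$, $a$ is analytic on a neighbourhood of $\overline{\{\im k>\eta_n\}}$, the hypotheses of Proposition~\ref{zeroes} are genuinely met at level $\eta_n$, and the rest of your argument --- the lower bound $\im k_j-\eta_n\ge|\eta_n|$ and the passage to the limit --- goes through unchanged.
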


\begin{proof}
We apply Proposition \ref{zeroes} for every $\eta'>\eta$ sufficiently close to $\eta$ and obtain
$$
\sum_j \left( \im k_j - \eta' \right)_+ \leq c_\nu A |\eta'|^{-\nu+1}
$$
Clearly, we have
$$
\sum_j \left( \im k_j - \eta'\right)_+ \geq |\eta'|\ \#\left\{ j:\ \im k_j\geq 0 \right\} \,.
$$
The corollary follows by passing to the limit $\eta'\to\eta$ in the thus obtained inequality.
\end{proof}


\section{Traces and determinants}

We use the standard notation $\mathfrak S_p$ for the Schatten classes with exponent $1\leq p<\infty$. If $n\in\N$, $K\in\mathfrak S_n$ and $\lambda_j(K)$ denote the eigenvalues of $K$, repeated according to algebraic multiplicities, the $n$-th order regularized determinant $\det{}_n(1+K)$ is defined by
$$
\det{}_n(1+K) = \prod_j \left( \left( 1+\lambda_j(K) \right) \exp\left( \sum_{m=1}^{n-1} \frac{(-1)^m}{m} \lambda_j(K)^m \right) \right) \,.
$$
The following properties are well-known, but we include a proof for the sake of completeness.

\begin{lemma}\label{detbounds}
Let $n\in\N$.
\begin{enumerate}
\item For any $n-1\leq p\leq n$ with $p>0$ there is a $\Gamma_{n,p}$ such that
$$
\ln |\det{}_n(1+K)| \leq \Gamma_{n,p} \|K\|_p^p \,.
$$
\item For any $0\leq\theta<1$ and $0<p\leq n$ there is a $\Gamma_{n,p}(\theta)$ such that, if $\|K\|\leq\theta$, then
$$
\left| \log \det{}_n(1+K) \right| \leq \Gamma_{n,p}(\theta)\ \|K\|_p^p \,.
$$
\end{enumerate}
\end{lemma}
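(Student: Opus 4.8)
The plan is to reduce both parts to pointwise estimates on the scalar entire function
\[
g_n(z) := (1+z)\exp\!\left(\sum_{m=1}^{n-1}\frac{(-1)^m}{m}\,z^m\right),
\]
since by definition $\det_n(1+K)=\prod_j g_n(\lambda_j(K))$, so that $\ln|\det_n(1+K)|=\sum_j\ln|g_n(\lambda_j(K))|$, and likewise $\log\det_n(1+K)=\sum_j\log g_n(\lambda_j(K))$ whenever every $g_n(\lambda_j(K))\neq0$. The passage from the eigenvalues $\lambda_j(K)$ to the Schatten norms will be effected by Weyl's inequality $\sum_j|\lambda_j(K)|^p\le\sum_j s_j(K)^p=\|K\|_p^p$, valid for every $p>0$ (it follows from the multiplicative majorization $\prod_{j\le N}|\lambda_j(K)|\le\prod_{j\le N}s_j(K)$ together with convexity and monotonicity of $t\mapsto e^{pt}$).

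For part (1) I would first establish the elementary inequality $\ln|g_n(z)|\le\Gamma_{n,p}|z|^p$ for all $z\in\C$, valid whenever $n-1\le p\le n$ and $p>0$. Writing $\ln|g_n(z)|=\ln|1+z|+\re\sum_{m=1}^{n-1}\frac{(-1)^m}{m}z^m$, one observes that the Taylor series of $\log g_n$ at the origin is $\sum_{m\ge n}\frac{(-1)^{m-1}}{m}z^m$, hence $\ln|g_n(z)|=O(|z|^n)$ as $z\to0$ and therefore $\ln|g_n(z)|\le C|z|^p$ on a neighbourhood of $0$ because $p\le n$. For $|z|\ge1$ the dominant contribution is $\re\frac{(-1)^{n-1}}{n-1}z^{n-1}\le\frac{1}{n-1}|z|^{n-1}$ (with $\ln|1+z|$ negligible), which is $\le C|z|^p$ because $p\ge n-1$ (the case $n=1$ being handled in the same way, with $\ln|1+z|$ replacing the polynomial term), and on the remaining compact annulus the bound follows from continuity. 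Taking $\Gamma_{n,p}$ to be the largest of these constants and summing over $j$ gives $\ln|\det_n(1+K)|\le\Gamma_{n,p}\sum_j|\lambda_j(K)|^p\le\Gamma_{n,p}\|K\|_p^p$, which is (1).

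For part (2), the hypothesis $\|K\|\le\theta<1$ forces $|\lambda_j(K)|\le\theta$ for every $j$, so $g_n(\lambda_j(K))\neq0$ and $\log\det_n(1+K)=\sum_j\log g_n(\lambda_j(K))$ is well defined. From the series $\log g_n(z)=\sum_{m\ge n}\frac{(-1)^{m-1}}{m}z^m$ one obtains, for $|z|\le\theta$,
\[
|\log g_n(z)|\le\sum_{m\ge n}\frac{|z|^m}{m}\le\frac{|z|^n}{n(1-\theta)}\,,
\]
hence $|\log\det_n(1+K)|\le\frac{1}{n(1-\theta)}\sum_j|\lambda_j(K)|^n\le\frac{1}{n(1-\theta)}\|K\|_n^n$. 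Finally, since $s_j(K)\le\|K\|\le\theta$ we have $s_j(K)^n\le\theta^{\,n-p}s_j(K)^p$ for $0<p\le n$, so $\|K\|_n^n\le\theta^{\,n-p}\|K\|_p^p$, and (2) follows with $\Gamma_{n,p}(\theta)=\theta^{\,n-p}/(n(1-\theta))$.

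The only genuine obstacle is the global pointwise bound in part (1), and in particular checking that the exponent may be taken anywhere in the interval $[n-1,n]$ rather than only at the endpoint $n$; this is exactly what allows the final estimate to be phrased with a Schatten exponent strictly below $n$, and it forces one to track the behaviour of $\log g_n$ carefully both at $0$ and at $\infty$. The remaining ingredients — Weyl's inequality and the termwise estimate of $\log g_n$ — are standard parts of the theory of regularized determinants.
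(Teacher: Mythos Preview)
Your proof is correct and follows essentially the same route as the paper: reduce to the pointwise scalar bound $\ln|g_n(z)|\le\Gamma_{n,p}|z|^p$ (using the $|z|^n$ behaviour near $0$ and the $|z|^{n-1}$ behaviour at infinity to cover the full range $n-1\le p\le n$), sum over eigenvalues, and invoke Weyl's inequality; for part~(2) you use the same series expansion $\log g_n(z)=\sum_{m\ge n}(-1)^{m-1}z^m/m$ and the same passage from exponent $n$ down to $p$ via $\theta^{\,n-p}$. The only cosmetic difference is that in part~(2) you apply the $\theta^{\,n-p}$ bound to singular values after Weyl, while the paper applies it to eigenvalues before Weyl; either order works.
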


\begin{proof}
To prove the first assertion, let $f(z) := (1+z) \exp \left( \sum_{m=1}^{n-1} \frac{(-1)^m}{m} z^m \right)$. Then $\ln|f(z)|$ can be bounded by a constant times $|z|^n$ for small $|z|$ and by a constant times $|z|^{n-1}$ for large $|z|$. Thus, $\ln|f(z)|\leq \Gamma_{n,p}|z|^p$ for any $n-1\leq p\leq n$, and so
$$
\ln |\det{}_n(1+K)| \leq \Gamma_{n,p} \sum_j |\lambda_j(K)|^p
$$
By Weyl's inequality \cite[Thm. 1.15]{Si}, the sum on the right side does not exceed $\|K\|_p^p$.

To prove the second assertion, we note that since $|\lambda_j(K)|\leq \|K\|\leq\theta<1$, we have
\begin{align*}
\log \det{}_n(1+K) & = \sum_j \left( \log \left(1+\lambda_j(K) \right) + \sum_{m=1}^{n-1} \frac{(-1)^m}{m} \lambda_j(K)^m \right) \\
& = \sum_j \sum_{m=n}^\infty \frac{(-1)^{m-1}}{m} \lambda_j(K)^m \,.
\end{align*}
We bound
$$
\left| \log \det{}_n(1+K) \right| \leq \sum_j \sum_{m=n}^\infty \frac{1}{m} |\lambda_j(K)|^n \theta^{m-n} = \gamma_n(\theta) \sum_j |\lambda_j(K)|^n
$$
and obtain the assertion for $p=n$ with $\Gamma_{n,n}(\theta)=\gamma_n(\theta)$ again by Weyl's inequality. If $0<p<n$, we simply use $|\lambda_j(K)|^n \leq \theta^{n-p} |\lambda_j(K)|^p$ and get the inequality with $\Gamma_{n,p}(\theta) = \theta^{n-p} \gamma_n(\theta)$.
\end{proof}

The previous proof and a simple computation show that for $n=p=2$ one can take
\begin{equation}
\label{eq:consths}
\Gamma_{2,2} = 1/2 \,.
\end{equation}

We next recall a version of the Birman--Schwinger principle. We state it in the setting of \cite{Fr3}, namely, where $H_0$ is a non-negative self-adjoint operator and $G_0$ and $G$ are operators with $\dom G\supset\dom H_0^{1/2}$ and $\dom G_0\supset\dom H_0^{1/2}$ and such that $G_0(H_0+1)^{-1/2}$ and $G(H+1)^{-1/2}$ are compact. Then the quadratic form
$$
\| H_0^{1/2} u \|^2 + (G u ,G_0 u)
$$
defines an $m$-sectorial operator, which we shall denote by $H$. The Birman--Schwinger principle states that $z\in\rho(H_0)$ is an eigenvalue of $H$ iff $-1$ is an eigenvalue of the Birman--Schwinger operator $G_0(H_0-z)^{-1} G^*$. Moreover, the corresponding geometric multiplicities coincide.

The following lemma says that even the algebraic multiplicities of eigenvalues of $H$ can be characterizes in terms of a quantity related to the Birman--Schwinger operator.

\begin{lemma}\label{mult}
Assume that for some $n\in\N$, $G_0(H_0-\zeta)^{-1}G^*\in\mathfrak S_n$ for all $\zeta\in\rho(H_0)$. Then the function $\zeta\mapsto\det{}_n(1+G_0(H_0-\zeta)^{-1}G)$ is analytic in $\rho(H_0)$. For $z\in\rho(H_0)$ one has $\det{}_n(1+G_0(H_0-z)^{-1}G)=0$ iff $z$ is an eigenvalue of $H$ and the order of the zero coincides with the algebraic multiplicity.
\end{lemma}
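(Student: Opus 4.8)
The plan is to run the Birman--Schwinger principle recalled above ``with multiplicities'', via the regularized perturbation determinant $f(\zeta):=\det{}_n(1+K(\zeta))$ of the Birman--Schwinger operator $K(\zeta):=G_0(H_0-\zeta)^{-1}G^*$. Three things must be established: that $f$ is analytic on $\rho(H_0)$; that its zeros are exactly the eigenvalues of $H$ in $\rho(H_0)$; and that the order of each zero equals the algebraic multiplicity of the corresponding eigenvalue.

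For analyticity I would first verify that $\zeta\mapsto K(\zeta)$ is analytic as a map into $\mathfrak S_n$. The resolvent is operator-norm analytic, and the factorization
$$
K(\zeta)=\bigl(G_0(H_0+1)^{-1/2}\bigr)\bigl((H_0+1)^{1/2}(H_0-\zeta)^{-1}(H_0+1)^{1/2}\bigr)\bigl((H_0+1)^{-1/2}G^*\bigr)
$$
exhibits $K(\cdot)$ as operator-norm analytic, with local power series $K(\zeta)=\sum_{k\ge0}(\zeta-\zeta_0)^k\,G_0(H_0-\zeta_0)^{-k-1}G^*$; using the resolvent identity together with the hypothesis $K(\zeta)\in\mathfrak S_n$ for all $\zeta$, the coefficients lie in $\mathfrak S_n$ and the series converges there near $\zeta_0$ (equivalently, $K(\cdot)$ is weakly analytic against the norming set of finite-rank functionals and locally $\mathfrak S_n$-bounded, which forces $\mathfrak S_n$-analyticity). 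Composing with the map $\mathfrak S_n\ni K\mapsto\det{}_n(1+K)$, which is analytic --- it is locally Lipschitz by the estimate $\ln|\det{}_n(1+K)|\le\Gamma_{n,n}\|K\|_n^n$ of Lemma~\ref{detbounds} --- gives analyticity of $f$ on $\rho(H_0)$.

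Next, fix $z\in\rho(H_0)$. Since $K(z)$ is compact, $f(z)=0$ iff $1+K(z)$ is not invertible, iff $-1$ is an eigenvalue of the Birman--Schwinger operator, iff (by the principle recalled above) $z$ is an eigenvalue of $H$. It remains to match orders with algebraic multiplicities --- the crux --- which I would split into: (a) the order of the zero of $f$ at $z$ equals the algebraic multiplicity of $z$ as a characteristic value of the analytic operator function $w\mapsto 1+K(w)$ (valued in $1+\mathfrak S_n$); and (b) the latter equals the algebraic multiplicity of $z$ as an eigenvalue of $H$. For (a) I would use the local Gohberg--Sigal factorization $1+K(w)=E_1(w)\bigl(1+\sum_i((w-z)^{\kappa_i}-1)P_i\bigr)E_2(w)$ near $z$, with $E_j(\cdot)$ analytic and invertible there, $E_j(w)-1\in\mathfrak S_n$, and $P_i$ mutually orthogonal rank-one projections: then $\det{}_n(1+K(w))$ equals $\prod_i(w-z)^{\kappa_i}$ times an analytic factor non-vanishing at $z$, so the order of the zero is $\sum_i\kappa_i$, the algebraic multiplicity as a characteristic value. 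For (b) I would set up the ``Birman--Schwinger correspondence with multiplicities'': a Jordan chain $u_0,\dots,u_{p-1}$ of $H$ at $z$ (with $(H-z)u_0=0$, $(H-z)u_j=u_{j-1}$) maps to $\phi_j:=G_0u_j$, which forms a Jordan chain of $w\mapsto 1+K(w)$ at $z$; conversely such chains come from $u_j:=-(H_0-z)^{-1}G^*\phi_j$ (corrected by lower chain elements), and one checks this correspondence is a bijection preserving partial multiplicities, so the two algebraic multiplicities coincide. (Equivalently, (a)+(b) follow together from the logarithmic-derivative identity $\tfrac{d}{dw}\log f(w)=(-1)^{n-1}\Tr\bigl((1+K(w))^{-1}K(w)^{n-1}K'(w)\bigr)$, valid off $\spec_p(H)$ with an $\mathfrak S_1$-valued integrand, by integrating around $z$ and evaluating the residue via the Konno--Kuroda identity $(H-w)^{-1}=(H_0-w)^{-1}-(H_0-w)^{-1}G^*(1+K(w))^{-1}G_0(H_0-w)^{-1}$ and cyclicity of the trace, the answer being $\Tr P_z=$ algebraic multiplicity, with $P_z$ the Riesz projection of $H$ at $z$.)

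The main obstacle is step (b): unlike the geometric Birman--Schwinger principle, which only compares kernels, here one must track the full Jordan structure (partial multiplicities) and check that $u_j\mapsto G_0u_j$ respects it; in the trace-formula variant the delicate point is to extract the residue at $z$ without ever splitting the $\mathfrak S_1$ integrand into individually ill-defined pieces, since for $n\ge2$ neither $\Tr(H-w)^{-1}$ nor $\Tr(H_0-w)^{-1}$ exists separately. The analyticity step's passage from operator-norm analyticity and pointwise membership in $\mathfrak S_n$ to $\mathfrak S_n$-analyticity also needs a little care, but is routine.
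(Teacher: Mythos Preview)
Your proposal is correct in outline and is in fact considerably more detailed than what the paper does: the paper gives no proof at all, instead remarking that analyticity is well-known, that the $n=1$ case is classical, and that the general $n$ result is ``essentially due to'' Latushkin--Sukhtayev \cite{LaSu}, with the extension to the present abstract setting in \cite{Fr3}. So there is no paper proof to compare against; what you have written is essentially a sketch of the argument one finds by chasing those references, and the two routes you indicate --- Gohberg--Sigal factorization plus Jordan-chain correspondence, or the logarithmic-derivative/Konno--Kuroda residue computation --- are exactly the standard ones.

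One small quibble: your justification that $K\mapsto\det{}_n(1+K)$ is analytic on $\mathfrak S_n$ via the bound $\ln|\det{}_n(1+K)|\le\Gamma_{n,n}\|K\|_n^n$ from Lemma~\ref{detbounds} is not quite right --- that estimate gives local boundedness, not local Lipschitz continuity or analyticity. The analyticity of the regularized determinant on $\mathfrak S_n$ is a separate (standard) fact, proved e.g.\ in \cite[Ch.~9]{Si}; you should cite that directly rather than try to derive it from the crude upper bound. Otherwise your identification of the delicate point (tracking Jordan structure through the Birman--Schwinger map, or equivalently isolating the residue without splitting into non-trace-class pieces) is spot on.
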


The analyticity of the function $\zeta\mapsto\det{}_n(1+G_0(H_0-\zeta)^{-1}G)$ is well-known and so is the result concerning the algebraic multiplicity in the case $n=1$. The result for general $n$ is essentially due to \cite{LaSu}; see also \cite{Fr3} for an extension of their proof to the present setting.


\section{Resolvent bounds}\label{sec:resbounds}

In this section we collect trace ideal bounds for the Birman--Schwinger operator
\begin{equation}
\label{eq:bs}
K(k) = \sqrt{V} (-\Delta -k^2)^{-1} \sqrt{|V|} \,.
\end{equation}
We use the notation $\sqrt{V(x)} = V(x)/\sqrt{|V(x)|}$ if $V(x)\neq 0$ and $\sqrt{V(x)}=0$ if $V(x)=0$.

We begin with the case of the half-line, that is, $-\Delta$ in \eqref{eq:bs} denotes the Dirichlet Laplacian on $(0,\infty)$. From the explicit expression of its integral kernel it is easy to see that, if $V$ is bounded and has compact support, $K(k)$ admits an analytic continuation to an entire operator family on $L^2(\R_+)$. The following proposition gives a bound on the Hilbert--Schmidt norm.

\begin{proposition}\label{resbound1d}
For any $k\in\C\setminus\{0\}$,
$$
\| K(k) \|_{\mathfrak S_2} \leq \frac{1}{|k|} \int_0^\infty e^{2x(\im k)_-} |V(x)|\,dx \,,
$$
in the sense that $K(k)$ is Hilbert--Schmidt if the integral on the right side is finite.
\end{proposition}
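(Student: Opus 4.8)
The plan is to compute the Hilbert--Schmidt norm directly from the integral kernel of the operator $K(k)$. For the Dirichlet Laplacian on $(0,\infty)$, the resolvent $(-\Delta-k^2)^{-1}$ has the explicit Green's function
$$
G_k(x,y) = \frac{1}{k} \sin(k\, x_\wedge) \, e^{ik\, x_\vee} \,,
$$
where $x_\wedge = \min(x,y)$ and $x_\vee = \max(x,y)$, chosen with the branch of $k$ so that this decays for $\im k > 0$; by analytic continuation in $k$ this formula persists for all $k \in \C\setminus\{0\}$ (and this is exactly the continuation alluded to in the text just before the statement). Hence the kernel of $K(k)$ is $\sqrt{V(x)}\, G_k(x,y)\, \sqrt{|V(y)|}$, and
$$
\| K(k) \|_{\mathfrak S_2}^2 = \int_0^\infty\!\!\int_0^\infty |V(x)| \, |G_k(x,y)|^2 \, |V(y)| \, dx\, dy \,.
$$

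The key step is then the pointwise bound $|G_k(x,y)| \leq |k|^{-1} e^{x_\wedge (\im k)_- } \, \dots$ — more precisely, I would show
$$
|G_k(x,y)| \leq \frac{1}{|k|}\, e^{(\im k)_- (x+y)}
$$
for all $x,y>0$. To see this, write $|\sin(k\,x_\wedge)| \leq e^{|\im k|\, x_\wedge}$ and $|e^{ik\,x_\vee}| = e^{-(\im k)\, x_\vee} \leq e^{(\im k)_-\, x_\vee}$. Combining, $|G_k(x,y)| \le |k|^{-1} e^{|\im k| x_\wedge + (\im k)_- x_\vee}$. When $\im k \geq 0$ the exponent is $\le (\im k) x_\wedge = 0 \cdot(\dots)$ — wait, here $(\im k)_-=0$, so actually one needs to be slightly more careful: in that case $|e^{ik x_\vee}| \le 1$ and $|\sin(k x_\wedge)| \le e^{(\im k) x_\wedge}$, which is not bounded by $1$. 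So I would instead use the sharper elementary bound $|\sin(k\,t)| \le e^{(\im k)_- \, t}\,( \text{something})$; in fact $|\sin w| \le e^{|\im w|}$ always, and more usefully $|k|^{-1}|\sin(k t)| \le t \, e^{(\im k)_- t}$ is false in general too. The cleanest route: note $|e^{ik x_\vee}|\, |\sin(k x_\wedge)| = |e^{ik x_\vee}| \cdot \tfrac12|e^{ik x_\wedge} - e^{-ik x_\wedge}| \le \tfrac12\big(|e^{ik(x_\vee - x_\wedge)}| + |e^{ik(x_\vee + x_\wedge)}|\big) = \tfrac12\big(e^{(\im k)_-(x_\vee-x_\wedge)} + e^{(\im k)_-(x_\vee+x_\wedge)}\big)$ once one checks the signs, and since $(\im k)_- \ge 0$ and $x_\vee - x_\wedge \le x_\vee + x_\wedge = x+y$, this is $\le e^{(\im k)_-(x+y)}$. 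Hence $|G_k(x,y)| \le |k|^{-1} e^{(\im k)_-(x+y)}$, which is exactly what is needed.

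Plugging this into the Hilbert--Schmidt integral and factoring the double integral gives
$$
\| K(k) \|_{\mathfrak S_2}^2 \leq \frac{1}{|k|^2} \int_0^\infty\!\!\int_0^\infty |V(x)|\, e^{2(\im k)_- (x+y)}\, |V(y)|\, dx\, dy = \frac{1}{|k|^2} \left( \int_0^\infty e^{2x(\im k)_-} |V(x)|\, dx \right)^2 ,
$$
and taking square roots yields the claimed inequality. The statement that $K(k)$ is Hilbert--Schmidt whenever the right-hand side is finite then follows because a kernel operator with finite $L^2$ kernel norm is automatically Hilbert--Schmidt. I expect the only genuine obstacle to be bookkeeping the branch of the square root / the sign conventions in the Green's function so that the decaying solution is used and the estimate $|G_k(x,y)| \le |k|^{-1} e^{(\im k)_-(x+y)}$ comes out with the correct constant $1$ (and not, say, $2$); everything else is a one-line factorization.
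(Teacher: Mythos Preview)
Your proposal is correct and, once you expand $\sin(k x_\wedge)e^{ik x_\vee}=\tfrac{1}{2i}\big(e^{ik(x+y)}-e^{ik|x-y|}\big)$, it is literally the paper's proof: the paper writes the kernel as $g_k(x,y)=\tfrac{1}{2ik}(e^{ik(x+y)}-e^{ik|x-y|})$, bounds it by $|k|^{-1}e^{(x+y)(\im k)_-}$ exactly as you do, and then factors the Hilbert--Schmidt double integral. The intermediate false starts in your write-up are unnecessary --- the two-term exponential form of the kernel gives the pointwise bound in one line.
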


\begin{proof}
The integral kernel of $(-\Delta-k^2)^{-1}$ is the function
$$
g_k(x,y) = \frac{1}{2ik} \left( e^{ik(x+y)} - e^{ik|x-y|}\right) \,,
$$
which satisfies
$$
|g_k(x,y)| \leq \frac{1}{|k|} e^{(x+y)(\im k)_-} \,.
$$
Combining this bound with the identity
$$
\| K(k) \|_{\mathfrak S_2}^2 = \int_0^\infty \int_0^\infty |V(x)| |g_k(x,y)|^2 |V(y)| \,dx\,dy
$$
we obtain the claimed bound.
\end{proof}

We now consider the case of $\R^d$ with $d\geq 3$ odd. The operator $-\Delta$ in \eqref{eq:bs} denotes the Laplacian in $\R^d$. It is well-known (see, e.g., \cite[Theorem 3.1]{DZ} for a textbook proof) that, since $d$ is odd, $(-\Delta-k^2)^{-1}$ admits an analytic continuation to an entire operator family when considered as an operator from compactly supported functions in $L^2(\R^d)$ to $L^2_{\loc}(\R^d)$. Thus, if $V$ is bounded and compactly supported, $K(k)$ has a analytic continuation to an entire operator family on $L^2(\R^d)$. The following proposition implies, in particular, that if $V$ decays exponentially, then the Birman--Schwinger operator also admits an analytic continuation to (part of) the lower half-plane and that this continuation belongs to a certain trace ideal.

\begin{proposition}\label{resboundoddd}
Let $d\geq 3$ be odd. There are constants $C_d>0$, $\beta_d>0$ such that for any $k\in\C\setminus\{0\}$,
$$
\| K(k) \|_{\mathfrak S_{d+1}} \leq C_d \left( \frac{1}{|k|} \int_{\R^d} e^{\beta_d |x|(\im k)_-} |V(x)|^{(d+1)/2} \,dx \right)^{2/(d+1)} \,,
$$
in the sense that $K(k)\in\mathfrak S_{d+1}$ if the integral on the right side is finite. 
\end{proposition}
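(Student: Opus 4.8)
The plan is to follow the complex interpolation strategy of Kenig--Ruiz--Sogge and its trace-ideal refinement in \cite{FrSa}, but with an analytic family of operators that carries an exponential weight so as to track the dependence on $(\im k)_-$. The two endpoints to interpolate between are a Hilbert--Schmidt-type bound and an operator-norm bound. For the first, note that the free resolvent kernel $G_k(x,y)$ in odd dimension $d$ has an explicit form in terms of $e^{ik|x-y|}$ times a polynomial in $(k|x-y|)^{-1}$; in particular one has a pointwise bound of the shape $|G_k(x,y)|\leq C_d\, e^{|x-y|(\im k)_-}\,\big(|k|\,|x-y|\big)^{-(d-1)/2}\big(1+(|k||x-y|)^{-(d-1)/2}\big)$ or, more crudely, a bound by $C_d|x-y|^{-(d-2)}e^{|x-y|(\im k)_-}$ uniformly in $k$ on $|x-y|\lesssim |k|^{-1}$ together with decay for large $|x-y|$. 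This already gives, for $d=3$, a Hilbert--Schmidt estimate by direct integration. For general odd $d$ one instead needs a family.

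Concretely, I would fix $k$ and introduce, for $z$ in the strip $0\le \re z\le 1$, the operator
$$
K_z(k) = |V|^{z(d+1)/4}\, e^{z\beta_d|x|(\im k)_-/2}\ R_z(k)\ e^{z\beta_d|y|(\im k)_-/2}\, |V|^{z(d+1)/4},
$$
where $R_z(k)$ is a suitable analytic family of operators interpolating the free resolvent (at $z=2/(d+1)$, where $z(d+1)/4=1/2$) with a smoothing or more singular operator at the endpoints; the exponential factor $e^{z\beta_d|x|(\im k)_-/2}$ is what produces the weight $e^{\beta_d|x|(\im k)_-}$ at the interpolation value after pairing the two $|x|$- and $|y|$-factors. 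On the line $\re z = 0$ one estimates the operator norm of $K_{iy}(k)$ using the KRS-type resolvent bound $\|(-\Delta-k^2)^{-1}\|_{L^{p'}\to L^p}\le C_d|k|^{d/p-d/p'-1}$ combined with Hölder; the exponential weights on this line are purely imaginary in the exponent of $e$, hence unimodular, so they are harmless. On the line $\re z = 1$ one estimates $\|K_{1+iy}(k)\|_{\mathfrak S_1}$ (trace norm): here the relevant resolvent-type operator must be bounded in trace class, which one obtains from a Hilbert--Schmidt bound on two factors, i.e. from the pointwise kernel bound above together with $\int\int |x-y|^{-2(d-2)}e^{\beta_d(|x|+|y|)(\im k)_-}\,dx\,dy$-type integrals controlled by $\|V\cdot\text{(weight)}\|$ in the correct $L^p$ space. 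Then Stein interpolation gives $\|K_{2/(d+1)}(k)\|_{\mathfrak S_{d+1}}\le (\text{line }0\text{ bound})^{(d-1)/(d+1)}(\text{line }1\text{ bound})^{2/(d+1)}$, and collecting the powers of $|k|$ and of $\int e^{\beta_d|x|(\im k)_-}|V|^{(d+1)/2}$ yields the stated inequality.

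The main obstacle is the construction of the analytic family $R_z(k)$: unlike in the self-adjoint or real-$k$ setting of \cite{KRS,FrSa}, here $k$ is complex with possibly negative imaginary part, so the free resolvent kernel grows exponentially in $|x-y|$, and the analytic continuation of the Bessel-function kernel in odd dimensions must be handled without explicit formulas as $d$ grows. One must verify that $z\mapsto K_z(k)$ is genuinely analytic and admissible (appropriate growth in $\im z$) as a map into a trace ideal, which requires care because the exponential weights $e^{z\beta_d|x|(\im k)_-/2}$ are unbounded when $\re z>0$ and must be balanced against the decay of $|V|^{z(d+1)/4}$ — this is precisely why the hypotheses are stated as finiteness of the weighted integral, and why $\beta_d$ (rather than a free parameter) appears. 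Once the family is set up so that the weights distribute correctly, the two endpoint estimates are, respectively, the KRS bound and a Schur/Hilbert--Schmidt computation, both essentially routine; a secondary technical point is tracking the precise power $|k|^{-2/(d+1)\cdot\text{something}}$ through the interpolation so that the final bound has exactly $|k|^{-1}$ inside the parenthesis raised to $2/(d+1)$, which one arranges by the choice of how $|k|$-powers are split between the resolvent operator and the weights in $R_z(k)$.
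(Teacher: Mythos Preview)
Your proposal has the right architecture (Stein interpolation between an operator-norm endpoint and a Hilbert--Schmidt/trace endpoint, as in \cite{KRS,FrSa}), but it contains a genuine gap at the $\re z=0$ endpoint. You want to invoke a KRS-type bound $\|(-\Delta-k^2)^{-1}\|_{L^{p'}\to L^p}\leq C_d|k|^{\cdots}$ with $k$ fixed. When $\im k<0$, however, $(-\Delta-k^2)^{-1}$ is \emph{not} a bounded operator between any Lebesgue spaces: the analytic continuation of the free resolvent to the lower half-plane exists only as a map from compactly supported functions to $L^2_{\loc}$, and its kernel grows like $e^{|\im k||x-y|}$. Your weight $e^{z\beta_d|x|(\im k)_-/2}$ is unimodular on $\re z=0$, so it does nothing to tame this growth; the $\re z=0$ bound as you state it is simply false when $\im k<0$.

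The paper's resolution is different from inserting explicit weights: it keeps the family $|V|^{\zeta/2}(-\Delta-k_\zeta^2)^{-\zeta}|V|^{\zeta/2}$ but lets the spectral parameter move with $\zeta$ along the curve $k_\zeta=k+i|\im k|\,(e-e^\zeta)/(e-1)$. The point is that when $\re\zeta=0$ one has $\im k_\zeta\geq 0$, so the standard operator-norm bound applies; the exponential weight in the statement then emerges naturally on the $\re\zeta=(d+1)/2$ line from the Bessel-function kernel of $(-\Delta-k_\zeta^2)^{-\zeta}$, since there $\im k_\zeta$ can be as negative as $-\beta_d|\im k|/2$. A second point you are missing is that this interpolation only works in the cone $|\re k|>\gamma_d|\im k|$ (otherwise $|k_\zeta|$ can vanish and the denominator in the bound degenerates); the complementary region $|\re k|\lesssim|\im k|$ is handled by a separate, elementary argument comparing $|g_k|$ pointwise to $g_{i|k|}$ and applying Kato--Seiler--Simon. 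Your single-family scheme does not account for this dichotomy.
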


The proof of this proposition is somewhat involved and, in fact, presents the technical main result of this paper. In order to present the idea behind the proofs of Theorems \ref{main1} and \ref{main2} more clearly, we defer the proof of Proposition \ref{resboundoddd} to Section \ref{sec:resboundproof}.


\section{Proof of Theorems \ref{main1} and \ref{main2}}

In this section we prove our main results, Theorems \ref{main1} and \ref{main2}. We prove them simultaneously. Let us assume that $V$ is bounded and has compact support. The bound in this case implies the bound in the general case by a simple density argument.

As discussed in Section \ref{sec:resbounds}, the Birman--Schwinger operators $K(k)$ from \eqref{eq:bs} (with $-\Delta$ denoting the Dirichlet Laplacian if $d=1$ and the ordinary Laplacian if $d\geq 3$) extend to an entire family of bounded operators. The same proof shows that they are not only entire with respect to the norm of bounded operators, but even with respect to the norm of operators in $\mathfrak S_{d+1}$. (In fact, even in $\mathfrak S_{p}$ with $p>d/2$, see \cite[Lemma 3.21]{DZ}.) We emphasize that at this point we use the restriction to bounded, compactly supported potentials; in the general case, Propositions \ref{resbound1d} and \ref{resboundoddd} do not allow us to exclude a singularity at the origin.

We will apply Corollary \ref{zeroescor} to the function
$$
a(k) := \det{}_{d+1}(1+K(k))
$$
with $\eta= - \epsilon/\beta_d$, where $\beta_d$ is from Proposition \ref{resboundoddd} if $d\geq 3$ is odd and $\beta_1=2$ if $d=1$. Since $K(k)$ is analytic with values in $\mathfrak S_{d+1}$, the function $a$ is analytic. It follows from the resolvent bounds in Propositions \ref{resbound1d} and \ref{resboundoddd}, combined with item (2) in Lemma \ref{detbounds} (with $p=n=d+1$), that assumption \eqref{eq:ass1} is valid. Moreover, combining them with item (1) in Lemma \ref{detbounds} (again with $p=n=d+1$), we see that assumption \eqref{eq:ass2} holds with $\nu=2$ and
$$
A = \Gamma_{d+1,d+1} C_d^{d+1} \left( \int e^{\epsilon |x|} |V(x)|^{(d+1)/2}\,dx \right)^2 \,.
$$
Here $C_d=1$ if $d=1$. Thus, Corollary \ref{zeroescor} implies that
$$
\#\{ j:\ \im k_j \geq 0\} \leq \epsilon^{-2} \beta_d^2 c_2 \Gamma_{d+1,d+1} C_d^{d+1} \left( \int e^{\epsilon |x|} |V(x)|^{(d+1)/2}\,dx \right)^2 \,.
$$
It remains to use Lemma \ref{mult}, which says that the $k_j$ with $\im k_j> 0$ coincide with the square roots of the eigenvalues of $-\Delta+V$, counting algebraic multiplicities. This proves Theorems \ref{main1} and \ref{main2}.

In the case $d=1$, we can use the values of the constants
$$
c_2 = 1/2 \,,\quad \Gamma_{2,2} = 1/2 \,,\quad \beta_1 = 2 \,,\quad C_1 = 1
$$
(see \eqref{eq:consths}, \eqref{eq:zeroesconst1} and Proposition \ref{resbound1d}) to get the explicit constant in Theorem~\ref{main1}.
\qed


\section{Proof of Proposition \ref{resboundoddd}}\label{sec:resboundproof}

The bound from Proposition \ref{resboundoddd} for $\im k\geq 0$ is contained in \cite{FrSa}, so we focus on the case $\im k<0$. We are going to break the proof into two steps, according whether $|\im k|/|\re k|$ is large or small.

\begin{lemma}\label{resbound1}
Let $d\geq 3$. There are $\alpha_d,\beta_d,\gamma_d>0$ such that, for any $k\in\C$ with $\im k< 0$ and $|\re k|> \gamma_d|\im k|$,
\begin{equation}
\label{eq:resbound1}
\left\| K(k) \right\|_{\mathfrak S_{d+1}} \leq \alpha_d  \left( \frac{\int_{\R^d} e^{\beta_d |\im k||x|} |V(x)|^{(d+1)/2}\,dx}{|\re k| - \gamma_d |\im k|} \right)^{2/(d+1)} \,.
\end{equation}
One can take $\beta_d =2(e^{(d+1)/2}-1)/(e-1)$ and $\gamma_d=e^{(d+1)/2}/(e-1)$.
\end{lemma}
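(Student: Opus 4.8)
The plan is to estimate the Schatten norm $\|K(k)\|_{\mathfrak S_{d+1}}$ by factoring the resolvent kernel and applying complex interpolation, exactly in the spirit of the Kenig--Ruiz--Sogge and Frank--Sabin bounds, but now tracking the exponential growth of the kernel in the lower half-plane. The free resolvent kernel in $\R^d$ (with $d$ odd) is of the form $G_k(x,y) = c_d |x-y|^{-(d-2)} P_d(k|x-y|) e^{ik|x-y|}$ for a polynomial $P_d$ of degree $(d-3)/2$, so when $\im k < 0$ the factor $e^{ik|x-y|}$ contributes growth $e^{|\im k|\,|x-y|} \le e^{|\im k||x|} e^{|\im k||y|}$, while the polynomial $P_d(k|x-y|)$ contributes at worst a power of $|k||x-y|$. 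I would write $K(k) = \sqrt{V}\, G_k\, \sqrt{|V|}$ and split $G_k$ dyadically according to the size of $|k|\,|x-y|$: on the region $|k|\,|x-y| \lesssim 1$ the kernel behaves like the Newtonian potential $|x-y|^{-(d-2)}$ (mollified), and on $|k|\,|x-y|\gtrsim 1$ it behaves like $|k|^{(d-3)/2}|x-y|^{-(d-1)/2} e^{ik|x-y|}$, i.e. an oscillatory/decaying kernel of the type handled by the uniform Sobolev / Stein--Tomas machinery.

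The technical core is an analytic family of operators interpolating between a Hilbert--Schmidt bound (large $s := d+1$, where one simply integrates $|G_k|^2$ against $|V|$) and an $\mathfrak S_\infty$ bound coming from the $L^{p}\to L^{p'}$ resolvent estimate of \cite{KRS} (with $p = 2(d+1)/(d+3)$). I would set up a family $T_z$ of operators with kernels built from $G_k$ multiplied by appropriate powers tied to $z$, arranged so that $T_z$ is Hilbert--Schmidt on one line $\re z = \theta_0$ and bounded on the other line $\re z = \theta_1$, with bounds that are uniform in $\im z$ after inserting the weights $e^{\beta_d|\im k||x|/2}\sqrt{|V(x)|}$ symmetrically on both sides. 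Because the paper says ``the choice of the analytic family is more involved than in those papers,'' I expect the genuinely delicate point to be choosing this family so that (i) the weight $e^{\beta_d |\im k|\,|x|}$ that appears in the final bound is produced correctly and symmetrically, (ii) the polynomial factor $P_d(k|x-y|)$, whose degree grows with $d$, does not spoil the interpolation — this is presumably why one needs the hypothesis $|\re k| > \gamma_d |\im k|$, so that $|k|$ is comparable to $|\re k|$ and the high-frequency KRS estimate applies with a gain, and why $\beta_d, \gamma_d$ come out in the explicit exponential form stated.

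Concretely, the steps would be: (1) write down the explicit form of $G_k$ for odd $d$ and the pointwise bound $|G_k(x,y)| \le C_d \sum_{j=0}^{(d-1)/2} |k|^{j-1} |x-y|^{j-(d-1)} e^{|\im k||x-y|}$; (2) dominate $e^{|\im k||x-y|} \le e^{|\im k||x|}e^{|\im k||y|}$ and absorb these into weights $w(x) := e^{\beta_d|\im k||x|/2}$ split between the two $\sqrt{V}$ factors, so it suffices to bound $\|w\sqrt{V}\, G_k^{(\le)} \sqrt{|V|}\,w\|$ and the analogous high-frequency piece, where $G_k^{(\le)}, G_k^{(>)}$ are the restrictions to $|k||x-y|$ small/large; (3) on the low-frequency piece use the pure Riesz-potential Hilbert--Schmidt / trace-ideal bound (which converges precisely because $d+1 > d/(d-2)\cdot$something, giving the $(d+1)/2$ power of $V$); (4) on the high-frequency piece set up the analytic family in $z$ with the $e^{ik|x-y|}$ oscillation, use KRS at $\re z = \theta_1$ and an $L^2$ computation at $\re z = \theta_0$, invoke Stein interpolation for Schatten classes (as in \cite{FrSa}) to land in $\mathfrak S_{d+1}$, the division by $|\re k| - \gamma_d|\im k|$ coming from summing the dyadic/geometric series in $|k||x-y|$ under the constraint $|\re k|>\gamma_d|\im k|$; (5) combine the two pieces, rename constants, and read off the stated $\alpha_d$, $\beta_d = 2(e^{(d+1)/2}-1)/(e-1)$, $\gamma_d = e^{(d+1)/2}/(e-1)$ — these specific values presumably arise from bounding $\sum_{j=0}^{(d-1)/2} \rho^j$-type sums and the geometric series $\sum 2^{-j}$ with the $e$'s tracking the polynomial degree $(d-1)/2 \le (d+1)/2$. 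The main obstacle, as anticipated, is Step (4): making the analytic family honest — in particular ensuring the oscillatory factor is incorporated so the KRS estimate is genuinely applicable on one line while preserving the exponential weight, and getting a clean dependence on $|\re k|-\gamma_d|\im k|$ rather than something that blows up as $\gamma_d|\im k|\to|\re k|$, which is exactly the regime excluded by hypothesis and handled separately in the small-angle lemma that must follow.
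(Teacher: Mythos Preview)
Your outline misses the paper's central idea and, as written, has a real obstruction. In step (4) you want to apply a KRS-type $L^p\to L^{p'}$ bound on one interpolation line, but the operator $(-\Delta-k^2)^{-1}$ with $\im k<0$ is not bounded $L^p\to L^{p'}$ --- it is not even defined without exponential weights. If instead you strip off $e^{|\im k||x-y|}$ as a weight before interpolating, the remaining kernel is no longer a genuine resolvent and the KRS machinery does not apply to it. You never say how to reconcile these two requirements, and your remark that the constants $\beta_d,\gamma_d$ ``presumably arise from bounding $\sum_{j=0}^{(d-1)/2}\rho^j$-type sums'' confirms that you have not found the actual mechanism.

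What the paper does is let the \emph{spectral parameter} move with the interpolation parameter: it sets
\[
k_\zeta \;=\; k + i\,|\im k|\,\frac{e-e^\zeta}{e-1}
\]
and interpolates the family $\zeta\mapsto |V|^{\zeta/2}(-\Delta-k_\zeta^2)^{-\zeta}|V|^{\zeta/2}$ between $\re\zeta=0$ and $\re\zeta=(d+1)/2$. At $\re\zeta=0$ one has $\im k_\zeta\geq 0$, so $(-\Delta-k_\zeta^2)^{-\zeta}$ is a bounded Fourier multiplier and no KRS is needed at all. At $\re\zeta=(d+1)/2$ one computes a Hilbert--Schmidt norm directly from the Bessel-function kernel; on that line $\im k_\zeta$ can dip below zero but is bounded below by $-\tfrac12\beta_d|\im k|$ with $\beta_d=2(e^{(d+1)/2}-1)/(e-1)$, and $|k_\zeta|\geq|\re k|-\gamma_d|\im k|$ with $\gamma_d=e^{(d+1)/2}/(e-1)$. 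These two estimates are exactly where the stated constants come from --- they have nothing to do with the polynomial degree of the odd-dimensional kernel. Hadamard's three lines lemma at $\zeta=1$ (where $k_\zeta=k$) then gives \eqref{eq:resbound1}. Note also that the paper's argument uses only the Bessel representation and works for every $d\geq 3$, whereas your polynomial-kernel approach is restricted to odd $d$; the odd-dimension hypothesis is used only in the companion Lemma~\ref{resbound2}.
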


We emphasize that this lemma does not need $d$ to be odd. In this case one can still prove that $K(k)$ has an analytic continuation to the set $\C\setminus(-i)[0,\infty)$.

\begin{proof}
By a density argument we may assume that $V$ is bounded and compactly supported. As discussed above, under this assumption $K(k)$ has an analytic continuation to an entire function. We will show that for any $k$ as in the lemma and any finite rank operator $Q$,
\begin{equation}
\label{eq:resbound1proof}
\left| \tr K(k)Q\right| \leq \alpha_d  \left( \frac{\int_{\R^d} e^{\beta_d (\im k)_-|x|} |V(x)|^{(d+1)/2}\,dx}{|\re k| - \gamma_d |\im k|} \right)^{2/(d+1)} \|Q\|_{\mathfrak{S}_{(d+1)/d}} \,,
\end{equation}
which will imply the assertion. 

To prove \eqref{eq:resbound1proof} we use complex interpolation. Namely, for fixed $k$ as in the lemma we will construct an analytic family of operators $K_\zeta$ such that $K_1=K(k)$ for $\zeta=1$. The construction of $K_\zeta$ proceeds as follows. If $\im k>0$ and $\re\zeta\geq 0$, then the operator $(-\Delta-k^2)^{-\zeta}$ is well-defined by the spectral theorem or, equivalently, as a multiplier in Fourier space. Here $(\cdot)^{-\zeta}$ denotes the principal branch. If $\re\zeta>0$, this is an integral operator with integral kernel
\begin{equation}
\label{eq:reskernel}
\left(-\Delta-k^2 \right)^{-\zeta}(x,y) = \frac{1}{(2\pi)^d} \int_{\R^d} \frac{e^{i\xi\cdot (x-y)}}{\left(\xi^2-k^2\right)^\zeta}\,d\xi \,.
\end{equation}
We recall \cite[Section III.2.8]{GS} the formula
\begin{equation}
\label{eq:fouriertrafo}
\int_{\R^d} \frac{e^{i\xi\cdot x}}{\left(\xi^2-k^2\right)^\zeta}\,d\xi 
= (2\pi)^{d/2} \, \frac{2^{1-\zeta}}{\Gamma(\zeta)} \left( \frac{-ik}{|x|} \right)^{(d-2\zeta)/2} K_{(d-2\zeta)/2}(-ik|x|) \,,
\end{equation}
valid for $\im k>0$ and $\re\zeta>0$. Here $K_\nu$ denotes the Bessel function of the third kind. We will need the fact that
\begin{equation}
\label{eq:besselnu}
K_\nu(z) = K_{-\nu}(z) \,,
\end{equation}
as well as the following integral representation for this function \cite[Section 7.3.4]{EMOT},
\begin{equation}
\label{eq:bessel}
K_\nu(z) = \frac{1}{\Gamma(\nu+1/2)} \left(\frac{\pi}{2z}\right)^{1/2} e^{-z} \int_0^\infty e^{-t} t^{\nu-1/2} \left( 1+ \frac{t}{2z}\right)^{\nu-1/2}\,dt
\qquad\text{if}\ \re\nu>-1/2 \,.
\end{equation}
For fixed $\zeta$ with $\re\zeta>0$, the right side of \eqref{eq:fouriertrafo} has an analytic continuation with respect to $k$, with $k=0$ being possibly a branch point. This allows us to analytically continue the operator $W(-\Delta-k^2)^{-\zeta}W$ to the lower half-plane if $W$ is a bounded, compactly supported function. At the same time, for fixed $k\neq 0$ (possibly in the lower half-plane), the operator family $W(-\Delta-k^2)^{-\zeta}W$ is analytic with respect to $\zeta$ in the upper half-plane.

We now fix $k\in\C$ with $\im k< 0$ and $\re k\neq 0$ and set
$$
k_\zeta = k + i |\im k| \frac{e-e^\zeta}{e-1} \,.
$$
For fixed $\re\zeta$ this describes a circle centered at $k+i|\im k|e/(e-1)$ with radius $|\im k| e^{\re\zeta}/(e-1)$. 

We consider the function
$$
f(\zeta):= e^{\zeta^2} \, \tr \left( S |V|^{\zeta/2} (-\Delta-k_\zeta^2)^{-\zeta} |V|^{\zeta/2} U |Q|^{(d+1-\zeta)/d} \right) \,,
$$
where $S(x)=V(x)/|V(x)|$ if $V(x)\neq 0$ and $S(x)=0$ if $V(x)=0$ and where $Q=U|Q|$ is the polar decomposition of $Q$. The function $f$ is analytic in $\zeta$ and at $\zeta=1$ its absolute value coincides with $e$ times the left side of \eqref{eq:resbound1proof}. We will apply Hadamard's three lines lemma to the function $f$, where the bounding lines are given by $\re\zeta=0$ and $\re\zeta=(d+1)/2$.

If $\re\zeta=0$, we use the fact that $\im k_\zeta\geq 0$. This implies that the argument of $|\xi|^2 - k_\zeta^2$ is uniformly bounded in $\xi\in\R^d$ and therefore
$$
\left\| (-\Delta-k_\zeta^2)^{-\zeta} \right\| 
= \sup_{\xi\in\R^d} \left| (|\xi|^2-k_\zeta^2)^{-\zeta} \right| \leq C_1 e^{C_1 |\im\zeta|}
\qquad\text{if}\ \re\zeta=0 \,.
$$
for some $C_1>0$. Thus, because of the superexponential decrease of the factor $e^{-(\im\zeta)^2}$,
\begin{equation}
\label{eq:hadamard1}
|f(\zeta)| \leq C_1' \|Q\|_{(d+1)/d}^{(d+1)/d} 
\qquad\text{if}\ \re\zeta=0 \,.
\end{equation}

If $\re\zeta=(d+1)/2$, we bound
\begin{align*}
|f(\zeta)| & \leq e^{(\re\zeta)^2-(\im\zeta)^2} \, \left\| |V|^{(d+1)/4} (-\Delta-k_\zeta^2)^{-\zeta} |V|^{(d+1)/4} \right\|_{\mathfrak S_2} \left\| |Q|^{(d+1)/(2d)} \right\|_{\mathfrak S_2} \\
& = e^{(\re\zeta)^2-(\im\zeta)^2} \, \left\| |V|^{(d+1)/4} (-\Delta-k_\zeta^2)^{-\zeta} |V|^{(d+1)/4} \right\|_{\mathfrak S_2} \left\|Q \right\|_{\mathfrak S_{(d+1)/d}}^{(d+1)/(2d)} \,.
\end{align*}
In order to control the Hilbert--Schmidt norm on the right side, we bound the integral kernel of $(-\Delta-k^2)^{-\zeta}$ for $\zeta=(d+1)/2+i\tau$ with $\tau\in\R$. According to \eqref{eq:reskernel}, \eqref{eq:fouriertrafo} and \eqref{eq:besselnu} it is given by
$$
(2\pi)^{-d/2} \, \frac{2^{1-(d+1)/2 -i\tau}}{\Gamma((d+1)/2+i\tau)} \left( \frac{-ik}{|x-y|} \right)^{-1/2-i\tau} K_{1/2+i\tau}(-ik|x-y|) \,.
$$
We take $k=k_\zeta$ and bound, using \eqref{eq:bessel},
\begin{align*}
& \left| K_{1/2+i\tau}(-ik_\zeta|x-y|) \right| \\
& \qquad\qquad \leq \left(\frac{\pi}{2|k_\zeta||x-y|}\right)^{1/2} \frac{e^{-\im k_\zeta|x-y|}}{|\Gamma(1+i\tau)|} \int_0^\infty e^{-t} \left| \left(1+ \frac{it}{2k_\zeta|x-y|}\right)^{i\tau} \right| dt \,.
\end{align*}
It is easy to see that there is a constant $C>0$ such that $|\re k_\zeta| \geq C |\im k_\zeta|$ for all $\zeta$ with $\re\zeta=(d+1)/2$. This implies that the argument of $1+it/(2k_\zeta|x-y|)$ is uniformly bounded in $t\in [0,\infty)$, $|x-y|\in [0,\infty)$ and $\tau=\im\zeta\in\R$. We now observe that
$$
\im k_\zeta = -|\im k| \frac{e^{(d+1)/2}\cos\tau-1}{e-1} \geq - \beta_d |\im k|/2
$$
with $\beta_d= 2(e^{(d+1)/2}-1)/(e-1)$ and
$$
|k_\zeta| \geq \sqrt{(\re k)^2 + \frac{(\im k)^2}{(e-1)^2}} - |\im k| \frac{e^{\re \zeta}}{e-1} \geq |\re k| - \gamma_d |\im k|
$$
with $\gamma_d = e^{(d+1)/2}/(e-1)$. Thus we conclude that
$$
\left| (-\Delta-k_\zeta^2)^{-1}(x,y) \right| \leq C_2 e^{C_2|\tau|} \frac{e^{\beta_d |\im k| |x-y|/2}}{|\re k|-\gamma_d |\im k|} \,,
\qquad
\zeta = \frac{d+1}{2} + i\tau \,,
$$
for some constant $C_2$ depending on $d$, but not on $x,y$ or $\tau$. This implies
$$
\left\| |V|^{(d+1)/4} (-\Delta-k_\zeta^2)^{-\zeta} |V|^{(d+1)/4} \right\|_{\mathfrak S_2}^2 \leq C_2^2 e^{2C_2|\tau|} \left( \frac{ \int_{\R^d} e^{\beta_d |\im k| |x|} |V(x)|^{(d+1)/2} \,dx }{|\re k|-\gamma_d |\im k|} \right)^2
$$
and therefore,
\begin{equation}
\label{eq:hadamard2}
|f(\zeta)| \leq C_2' \|Q\|_{\mathfrak S_{(d+1)/d}}^{(d+1)/(2d)} \frac{ \int_{\R^d} e^{\beta_d |\im k| |x|} |V(x)|^{(d+1)/2} \,dx }{|\re k|-\gamma_d |\im k|} 
\qquad\text{if} \ \re\zeta = \frac{d+1}{2} \,.
\end{equation}
According to Hadamard's three lines lemma we have
$$
|f(1)|\leq \left( \sup_{\re\zeta=0}|f(\zeta)| \right)^{(d-1)/(d+1)} \left( \sup_{\re\zeta=(d+1)/2}|f(\zeta)| \right)^{2/(d+1)} \,.
$$
Combining this with the bounds \eqref{eq:hadamard1} and \eqref{eq:hadamard2} we obtain \eqref{eq:resbound1proof}
\end{proof}

\begin{lemma}\label{resbound2}
Let $d\geq 3$ be odd. There is a constant $\alpha_d'$ such that for any $k\in\C$ with $\im k<0$,
\begin{equation}
\label{eq:resbound2}
\left\| K(k) \right\|_{\mathfrak S_{d+1}} \leq \alpha_d'  \left( \frac{\int_{\R^d} e^{(d+1) |k||x|} |V(x)|^{(d+1)/2}\,dx}{|k|} \right)^{2/(d+1)} \,.
\end{equation}
\end{lemma}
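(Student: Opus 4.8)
The plan is to prove Lemma~\ref{resbound2} by the same complex interpolation scheme as in the proof of Lemma~\ref{resbound1}, but with an analytic deformation of the spectral parameter tailored to the regime where $k$ may approach the negative imaginary axis. As before, by density we may assume $V$ bounded and compactly supported, so that $K(k)$ extends to an entire $\mathfrak S_{d+1}$-valued function; this is also where the hypothesis that $d$ is odd is used, since it guarantees that the resolvent $(-\Delta-k^2)^{-1}$, and more generally the complex powers with kernels \eqref{eq:reskernel}--\eqref{eq:fouriertrafo}, continue analytically across the real axis (with $k=0$ the only possible branch point). As in Lemma~\ref{resbound1}, it suffices to prove that for every finite-rank operator $Q$,
$$
\left|\tr K(k)Q\right|\le C_d\left(\frac{\int_{\R^d}e^{(d+1)|k||x|}|V(x)|^{(d+1)/2}\,dx}{|k|}\right)^{2/(d+1)}\|Q\|_{\mathfrak S_{(d+1)/d}},
$$
since \eqref{eq:resbound2} then follows by the duality between $\mathfrak S_{d+1}$ and $\mathfrak S_{(d+1)/d}$.

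Fixing $k$ with $\im k<0$, I would choose an analytic curve $\zeta\mapsto k_\zeta$ on the strip $0\le\re\zeta\le(d+1)/2$ with $k_\zeta\in\C\setminus(\R\cup\{0\})$ there and $k_1=k$, and form
$$
f(\zeta):=e^{\zeta^2}\,\tr\left(S|V|^{\zeta/2}(-\Delta-k_\zeta^2)^{-\zeta}|V|^{\zeta/2}U|Q|^{(d+1-\zeta)/d}\right)
$$
exactly as in the proof of Lemma~\ref{resbound1}, so that $f$ is analytic on the strip, continuous up to its boundary, and $|f(1)|=e\,|\tr K(k)Q|$. Applying Hadamard's three lines lemma on the strip with weight $\theta=2/(d+1)$ on the line $\re\zeta=(d+1)/2$ (so that $1=(1-\theta)\cdot0+\theta\cdot\frac{d+1}{2}$, and the exponents of $\|Q\|_{\mathfrak S_{(d+1)/d}}$ from the two boundary bounds add up to $\frac{d-1}{d}+\frac1d=1$) reduces everything to these two lines. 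For this to work the curve must also satisfy, on $\re\zeta=(d+1)/2$, the two-sided bound $c_d|k|\le|k_\zeta|\le\frac{d+1}{2}|k|$ and the condition that $\arg k_\zeta$ stays in a fixed compact subinterval of $(-\pi,\pi)$ bounded away from $-\pi/2$. Constructing such a curve is the main obstacle, and it is genuinely different from Lemma~\ref{resbound1}: the translation $k_\zeta=k+i|\im k|(e-e^\zeta)/(e-1)$ used there forces $|k_\zeta|$ to vanish somewhere on $\re\zeta=(d+1)/2$ when $k$ is near the imaginary axis, whereas a pure rotation $k_\zeta=ke^{i\psi(\zeta)}$ makes $|k_\zeta|$ unbounded on that line. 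A viable choice should have the form $k_\zeta^2=k^2\,g(\zeta)$, with $g$ a finite Blaschke-type product of M\"obius factors whose zeros and poles have real part outside $[0,(d+1)/2]$ and normalized by $g(1)=1$; then $g$ is analytic and zero-free on the strip with $|g|$ comparable to $1$ on each bounding line, and the positions of its factors can be chosen, depending on $\arg k$, so that $\arg(k^2g(\zeta))$ stays away from $0$ on the whole strip and lies in the required range on $\re\zeta=(d+1)/2$.

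Granting such a curve, the two boundary estimates are routine. On $\re\zeta=0$, write $\zeta=i\tau$; since $k_\zeta\notin\R$, the values $\xi^2-k_\zeta^2$, $\xi\in\R^d$, lie on a horizontal ray missing $(-\infty,0]$, so the continued multiplier $(-\Delta-k_\zeta^2)^{-i\tau}$ is bounded with norm at most $e^{C_d|\tau|}$; combined with the pointwise bound $1$ for $S$ and $|V|^{i\tau/2}$, with $\||Q|^{(d+1-i\tau)/d}\|_{\mathfrak S_1}=\|Q\|_{\mathfrak S_{(d+1)/d}}^{(d+1)/d}$, and with the superexponential factor $|e^{\zeta^2}|=e^{-\tau^2}$ absorbing $e^{C_d|\tau|}$, this gives $\sup_{\re\zeta=0}|f|\le C_d\|Q\|_{\mathfrak S_{(d+1)/d}}^{(d+1)/d}$. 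On $\re\zeta=(d+1)/2$, write $\zeta=\frac{d+1}{2}+i\tau$; by \eqref{eq:reskernel}, \eqref{eq:fouriertrafo} and \eqref{eq:besselnu} the kernel of $(-\Delta-k_\zeta^2)^{-\zeta}$ is a $d$-dependent constant times $\Gamma(\frac{d+1}{2}+i\tau)^{-1}\left(\frac{-ik_\zeta}{|x-y|}\right)^{-1/2-i\tau}K_{1/2+i\tau}(-ik_\zeta|x-y|)$, and inserting \eqref{eq:bessel} with $\nu=\frac12+i\tau$ — using that, by the properties of $k_\zeta$ on this line, $|\arg(1+\frac{it}{2k_\zeta|x-y|})|$ is bounded uniformly in $t\ge0$, that $|e^{ik_\zeta|x-y|}|=e^{-|x-y|\im k_\zeta}\le e^{(d+1)|k||x-y|/2}$, and that the powers of $|k_\zeta|$ and $|x-y|$ collapse to $|k_\zeta|^{-1}\le(c_d|k|)^{-1}$ — yields a kernel bound $\le C_de^{C_d|\tau|}|k|^{-1}e^{(d+1)|k||x-y|/2}$. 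Estimating the trace in $f$ by a product of two Hilbert--Schmidt norms, using $|x-y|\le|x|+|y|$ to factorize the double integral, and letting $e^{-\tau^2}$ absorb $e^{C_d|\tau|}$ and the $\Gamma$-factors, one obtains
$$
\sup_{\re\zeta=(d+1)/2}|f|\le C_d\,\frac{1}{|k|}\int_{\R^d}e^{(d+1)|k||x|}|V(x)|^{(d+1)/2}\,dx\;\|Q\|_{\mathfrak S_{(d+1)/d}}^{(d+1)/(2d)}.
$$
Hadamard's three lines lemma at $\zeta=1$ then combines the two bounds into the displayed inequality for $|\tr K(k)Q|$, and taking the supremum over finite-rank $Q$ with $\|Q\|_{\mathfrak S_{(d+1)/d}}\le1$ finishes the proof.
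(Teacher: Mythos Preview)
Your proposal has a genuine gap precisely where you flag ``the main obstacle'': the curve $\zeta\mapsto k_\zeta$ is never actually constructed. The ansatz $k_\zeta^2=k^2g(\zeta)$ with $g$ a ``finite Blaschke-type product of M\"obius factors'' does not pin down $g$, and the simultaneous requirements --- $g(1)=1$, $|g|$ bounded above and below on both boundary lines, $\arg(k^2g)$ avoiding $0$ throughout the strip while landing in a fixed sector bounded away from $\pm\pi$ on $\re\zeta=(d+1)/2$, all with constants uniform over the whole half-plane $\im k<0$ --- are asserted but not verified. There is also a subtler problem on the line $\re\zeta=0$: the condition $k_\zeta\notin\R$ only ensures that the Fourier \emph{multiplier} $(|\xi|^2-k_\zeta^2)^{-i\tau}$ is bounded, but the operator entering $f(\zeta)$ is the \emph{analytic continuation} of the kernel family \eqref{eq:fouriertrafo} from $\im k>0$, and for $\im k_\zeta<0$ these two objects differ (by the exponentially growing outgoing contribution). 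In Lemma~\ref{resbound1} this is handled by arranging $\im k_\zeta\ge0$ on $\re\zeta=0$; your conditions do not enforce this, and for $k$ near $-i|k|$ it is a further nontrivial constraint on the curve, incompatible with keeping $|k_\zeta|$ comparable to $|k|$ via a simple multiplicative deformation.

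The paper sidesteps all of this with an entirely different and much shorter argument, using neither complex interpolation nor any deformation of $k$. From the integral representation \eqref{eq:bessel} one gets the pointwise bound $|K_\nu(-ia)|\le e^{2|a|}K_\nu(|a|)$ for $\im a\le0$, hence $|g_k(x,y)|\le e^{2|k||x-y|}g_{i|k|}(x,y)$. Since $d+1$ is even and $g_{i|k|}\ge0$, writing out $\|K(k)\|_{\mathfrak S_{d+1}}^{d+1}=\tr\bigl((K(k)^*K(k))^{(d+1)/2}\bigr)$ as a $(d+1)$-fold integral and inserting this bound termwise yields
\[
\|K(k)\|_{\mathfrak S_{d+1}}\le\bigl\|e^{2|k||\cdot|}|V|^{1/2}(-\Delta+|k|^2)^{-1}|V|^{1/2}e^{2|k||\cdot|}\bigr\|_{\mathfrak S_{d+1}},
\]
and the right-hand side is controlled by a single application of the Kato--Seiler--Simon inequality to $(-\Delta+|k|^2)^{-1/2}|V|^{1/2}e^{2|k||\cdot|}\in\mathfrak S_{d+1}$.
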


\begin{proof}
Since $d$ is odd,
\begin{align*}
\left\| K(k) \right\|_{\mathfrak S_{d+1}}^{d+1} & = \tr \left( K(k)^* K(k) \cdots K^*(k) K(k) \right) \\
& = \int\cdots\int |V(x_1)| \overline{g_k(x_{d+1},x_1} |V(x_{d+1})| g_k(x_{d+1},x_d) \cdots   g_k(x_4,x_3) \\
& \qquad\qquad \qquad \times |V(x_3)| \overline{g_k(x_2,x_3)} |V(x_2)| g_k(x_2,x_1) \,dx_1\cdots dx_{d+1} \,,
\end{align*}
where $g_k(x,y)$ is the integral kernel of the operator $(-\Delta-k^2)^{-1}$. It follows from formula \eqref{eq:bessel} for the Bessel function that
$$
\left| K_\nu(-ia) \right| \leq e^{2|a|} K_\nu(|a|)
\qquad\text{if}\ \im a \leq 0 \,.
$$
This implies
$$
|g_k(x,y)|\leq e^{2|k||x-y|} g_{i|k|}(x,y) \leq e^{2|k|(|x|+|y|)} g_{i|k|}(x,y) \,,
$$
and therefore, in view of the above expression for $\left\| K(k) \right\|_{\mathfrak S_{d+1}}^{d+1}$,
\begin{align*}
\left\| K(k) \right\|_{\mathfrak S_{d+1}}^{d+1}
& = \int\cdots\int e^{2|k||x_1|} |V(x_1)| \, g_{i|k|}(x_{d+1},x_1) \, e^{2|k||x_{d+1}|} |V(x_{d+1})| \, g_{i|k|}(x_{d+1},x_d) \cdots \\
& \qquad\qquad \qquad \times g_{i|k|}(x_4,x_3)\, e^{2|k||x_3|} |V(x_3)| \, g_{i|k|}(x_2,x_3) \, e^{2|k|x_2|} |V(x_2)| \, \\
& \qquad\qquad \qquad \times g_{i|k|}(x_2,x_1) \,dx_1\cdots dx_{d+1} \\
& = \left\| e^{2|k||x|} K(i|k|) e^{2|k||y|} \right\|_{\mathfrak S_{d+1}}^{d+1} \,,
\end{align*}
To bound the right side we use the Kato--Seiler--Simon bound \cite[Thm. 4.1]{Si} and get
\begin{align*}
& \left\| e^{|k||x|} K(i|k|) e^{|k||y|} \right\|_{\mathfrak S_{d+1}}^{(d+1)/2}
\leq \left\| e^{|k||x|} K(i|k|) e^{|k||y|} \right\|_{\mathfrak S_{(d+1)/2}}^{(d+1)/2} \\
& \qquad\qquad = \left\| (-\Delta+|k|^2)^{-1/2} \sqrt{|V|} e^{|k||x|} \right\|_{\mathfrak S_{d+1}}^{d+1} \\
& \qquad\qquad\leq (2\pi)^{-d} \int_{\R^d} \frac{d\xi}{(|\xi|^2+|k|^2)^{(d+1)/2}} \int_{\R^d} |V(x)|^{(d+1)/2} e^{(d+1)|k||x|} \,dx \\
& \qquad\qquad = (2\pi)^{-d} \int_{\R^d} \frac{d\xi}{(1+|\xi|^2)^{(d+1)/2}} |k|^{-1} \int_{\R^d} |V(x)|^{(d+1)/2} e^{(d+1)|k||x|} \,dx \,.
\end{align*}
This proves the lemma.
\end{proof}

Finally, we are in position to give the

\begin{proof}[Proof of Proposition \ref{resboundoddd}]
The claimed bound for $\im k\geq 0$ follows from \cite{FrSa}. The bound for $\im k<0$ and $|\re k|\geq 2\gamma_d|\im k|$ follows from Lemma \ref{resbound1}. (Note that in this case one can bound $|\re k|-\gamma_d|\im k|\geq (\gamma_d/\sqrt{1+4\gamma_d^2}) |k|$ in the denominator.) Finally, the bound for $\im k<0$ and $|\re k|< 2\gamma_d|\im k|$ follows from Lemma \ref{resbound2}. (Note that in this case one can bound $|k|\leq \sqrt{1+4\gamma_d^2} |\im k|$ in the exponential.) This concludes the proof.
\end{proof}



\bibliographystyle{amsalpha}

\end{document}